\newtheorem{thm}{Theorem}[section]
\newtheorem{lem}[thm]{Lemma}
\newtheorem{cor}[thm]{Corollary}
\newtheorem{fact}{Theorem}
\theoremstyle{remark}
\newtheorem{rmk}[thm]{Remark}
\theoremstyle{definition}
\newtheorem{definition}[thm]{Definition}
\newtheorem{ex}[thm]{Example}
\numberwithin{equation}{section}
\DeclareMathOperator{\ess}{ess}
\DeclareMathOperator{\dist}{dist}
\DeclareMathOperator{\ind}{ind}
\newcommand{\cR}{\mathcal{R}}
\newcommand{\cM}{\mathcal{M}}
\newcommand{\cV}{\mathcal{V}}
\newcommand{\cW}{\mathcal{W}}
\newcommand{\cS}{\mathcal{S}}
\newcommand{\C}{\mathbb{C}}
\newcommand{\D}{\mathbb{D}}
\newcommand{\M}{\mathbb{M}}
\newcommand{\R}{\mathbb{R}}
\newcommand{\T}{\mathbb{T}}
\newcommand{\pP}{\mathbb{P}}
\newcommand{\Zero}{\mathbb{O}}
\newcommand{\Mydef}{\stackrel{\mbox{\footnotesize{\rm def}}}{=}}
\title{Monotone thematic factorizations of matrix functions}
\author{Alberto A. Condori}
\begin{document}
\address{Department of Mathematics, Michigan State University, East Lansing, MI 48824, USA}
\email{condoria@msu.edu}

\subjclass[2000]{Primary 47B35, 15A23; Secondary 47S30, 46E40}
\keywords{Hankel operators, Toeplitz operators, best approximation, badly approximable matrix functions, superoptimal approximation.}

\begin{abstract}
			We continue the study of the so-called thematic factorizations of admissible very badly approximable matrix functions.  
			These factorizations were introduced by V.V. Peller and N.J. Young for studying superoptimal approximation by bounded 
			analytic matrix functions.  Even though thematic indices associated with a thematic factorization of an admissible very 
			badly approximable matrix function are not uniquely determined by the function itself, R.B. Alexeev and V.V. Peller 
			showed that the thematic indices of any monotone \emph{non-increasing} thematic factorization of an admissible very badly 
			approximable matrix function are uniquely determined.  In this paper, we prove the existence of monotone \emph{non-decreasing} 
			thematic factorizations for admissible very badly approximable matrix functions. It is also shown that the thematic indices 
			appearing in a monotone non-decreasing thematic factorization are \emph{not} uniquely determined by the matrix function itself.  
			Furthermore, we show that the monotone non-increasing thematic factorization gives rise to a great number of other thematic 
			factorizations.  
\end{abstract}
\maketitle

\section{Introduction}\label{intro}

The problem of approximating a function by bounded analytic functions on the unit circle $\T$ under the uniform norm has been 
studied by many mathematicians.  In \cite{Kh}, it was shown that any continuous function $\varphi$ on $\T$ has a unique best 
approximation $f$ by bounded analytic functions, i.e. $\|\varphi-f\|_{\infty}=\dist(\varphi, H^{\infty})$, and the error 
function $\varphi-f$ has constant modulus on $\T$.  Different authors have studied the error function, or equivalently, 
functions $\psi$ for which the zero function is a best approximation.  These functions $\psi$ are called \emph{badly 
approximable}. For example, it was proved in \cite{Po} that a continuous function $\psi$ is badly approximable if and only 
if it has constant modulus and negative winding number.  This classification was extended to a larger collection of functions 
for which the notion of winding number is not available and can instead be stated in terms of Hankel and Toeplitz operators.  
This collection is referred to as the class of admissible functions.  A function $\psi\in L^{\infty}$ is said to be 
\emph{admissible} if the essential norm $\|H_{\psi}\|_{\rm e}$ of the Hankel operator $H_{\psi}$ is strictly less than its 
operator norm $\|H_{\psi}\|$.  It is well-known now that an admissible function $\psi$ is badly approximable if and only if 
$\psi$ has constant modulus, the Toeplitz operator $T_{\psi}$ is Fredholm, and \mbox{$\ind T_{\psi}>0$} (recall that for a 
Fredholm operator $T$, its index, $\ind T$, is defined to be $\dim\ker T-\dim\ker T^{*}$).  Moreover, it was proved by Nehari
(e.g. see Chapter 1 in \cite{Pe}) that
\[	\dist_{L^{\infty}}(\psi,H^{\infty})=\|H_{\psi}\|.	\]
As usual, given a bounded function $\psi$, the Hankel operator $H_{\psi}: H^{2}\rightarrow H^{2}_{-}\Mydef L^{2}\ominus H^{2}$ 
and Toeplitz operator $T_{\psi}: H^{2}\rightarrow H^{2}$ are defined by the formulas
\[	H_{\psi}f\Mydef\pP_{-}\psi f\,\mbox{ and }\; T_{\psi}f\Mydef\pP_{+}\psi f,	\]
respectively, where $\pP_{+}$ denotes the orthogonal projection from $L^{2}$ onto $H^{2}$ and $\pP_{-}=I_{L^{2}}-\pP_{+}$ 
is the orthogonal projection from $L^{2}$ onto $H^{2}_{-}$.

In this note, we deal mainly with matrix-valued functions on $\T$. We begin by introducing notation, definitions and some
known facts in order to adequately explain our results.\smallskip

\subsection{Badly and very badly approximable matrix functions}

Let $\M_{m,n}$ denote the space of $m\times n$ matrices equipped with the operator norm $\|\cdot\|_{\M_{m,n}}$ (of the 
space of linear operators from $\C^{n}$ to $\C^{m}$).  In the case of $n\times n$ matrices, we use the notation $\M_{n}$ 
to denote $\M_{n,n}$.  For $A\in\M_{m,n}$, we denote by $s_{j}(A)$ the $j$th-\emph{singular value} of the matrix 
$A$, i.e. the distance from $A$ (under the operator norm) to the set of matrices of rank at most $j$, for $j\geq 0$.

For an $m\times n$ matrix-valued function $G$ on $\T$, we define	
\[	\|G\|_{L^{\infty}(\M_{m,n})}=\ess\sup_{\zeta\in\T}\|G(\zeta)\|_{\M_{m,n}}.	\]
Also, for a space $X$ of scalar functions defined on $\T$, we use the notation $X(\M_{m,n})$ to denote $m\times n$ 
matrix functions whose entries belong to $X$.  In particular, if $n=1$, we use the notation $X(\C^{m})$ to denote 
$X(\M_{m,1})$.

\begin{definition}
			Given an $m\times n$ matrix-valued function $\Phi\in L^{\infty}(\M_{m,n})$, we say that $F\in H^{\infty}(\M_{m,n})$ 
			is a \emph{best approximation of $\Phi$} if
			\[	\|\Phi-F\|_{L^{\infty}(\M_{m,n})}=\dist_{L^{\infty}(\M_{m,n})}(\Phi, H^{\infty}(\M_{m,n})).	\]
			If the zero matrix function $\Zero$ is a best approximation of $\Phi$, then $\Phi$ is said to be \emph{badly approximable}.
\end{definition}

It is well-known that, unlike the scalar case, the continuity of a matrix function $\Phi$ does not guarantee the uniqueness 
of a best approximation.  However, the continuity of a matrix function $\Phi$ does guarantee uniqueness if the criterion of 
optimality of an approximation $F$ to $\Phi$ is refined as follows.

\begin{definition}\label{superoptDef}
			Let $\Phi\in L^{\infty}(\M_{m,n})$.  For $k\geq 0$, define the sets $\Omega_{k}=\Omega_{k}(\Phi)$ by
			\begin{align*}
						\Omega_{0}(\Phi)&=\{	F\in H^{\infty}(\M_{m,n}): F \mbox{ minimizes }\ess\sup_{\zeta\in\T}\|\Phi(\zeta)-F(\zeta)\|_{\M_{m,n}}	\},
														\mbox{ and}\\
						\Omega_{j}(\Phi)&=\{	F\in\Omega_{j-1}: F \mbox{ minimizes }\ess\sup_{\zeta\in\T}s_{j}(\Phi(\zeta)-F(\zeta))	\}\mbox{ for }j>0.
			\end{align*}
			We say that $F$ is a \emph{superoptimal approximation of $\Phi$} by bounded analytic matrix functions if $F$ belongs to
			$\displaystyle{\bigcap_{k\geq0}\Omega_{k}=\Omega_{\min\{m,n\}-1}}$, and in this case we define the \emph{superoptimal
			singular values of $\Phi$} by 
			\[	t_{j}(\Phi)=\ess\sup_{\zeta\in\T}s_{j}((\Phi-F)(\zeta))\mbox{ for }j\geq 0.	\]
			If the zero matrix function $\Zero$ belongs to $\Omega_{\min\{m,n\}-1}$, we say that $\Phi$ is \emph{very 
			badly approximable}.
\end{definition}

It follows from Definition \ref{superoptDef} that a matrix function $F$ belongs to $\Omega_{0}(\Phi)$ if and only if $F$ is a best 
approximation of $\Phi$.  In addition, $F$ is a superoptimal approximation of $\Phi$ if and only if $\Phi-F$ is very badly approximable.

As in the case of best approximation of bounded scalar-valued functions, Hankel operators on Hardy spaces play a 
fundamental role in the study of superoptimal approximation.  For a matrix function $\Phi\in L^{\infty}(\M_{m,n})$, 
we define the \emph{Hankel operator} $H_{\Phi}$ by
\[	H_{\Phi}f=\pP_{-}\Phi f\mbox{ for }f\in H^{2}(\C^{n}),	\]
where $\pP_{-}$ denotes the orthogonal projection of $L^{2}(\C^{m})$ onto $H^{2}_{-}(\C^{m})\Mydef L^{2}(\C^{m})\ominus 
H^{2}(\C^{m})$.  By a matrix analog of Nehari's Theorem (e.g. Theorem 2.2.2 in \cite{Pe}), it is known that
\[	\dist_{L^{\infty}(\M_{m,n})}(\Phi, H^{\infty}(\M_{m,n}))=\|H_{\Phi}\|.	\]
In particular, a matrix function $\Phi$ is badly approximable if and only if $\|H_{\Phi}\|=\|\Phi\|_{L^{\infty}(M_{m,n})}$.
Furthermore, it is known that
\[	\dist_{L^{\infty}(\M_{m,n})}(\Phi, (H^{\infty}+C)(\M_{m,n}))=\|H_{\Phi}\|_{\rm e}	\]
holds (e.g. see Theorem 4.3.8 in \cite{Pe}),  recalling that $H^{\infty}+C$ denotes the set of scalar functions in $L^{\infty}$ 
which are a sum of a bounded analytic function on the unit disk $\D$ and a continuous function on $\T$, and the essential norm 
of an operator $T$ between two Hilbert spaces is defined to be 
\[	\|T\|_{\rm e}\Mydef\inf\{\|T-K\|: K\mbox{ is compact }\}.\]

In this note, we consider matrix functions $\Phi\in L^{\infty}(\M_{m,n})$ such that $\|H_{\Phi}\|_{\rm e}$ is strictly less 
than the smallest non-zero superoptimal singular value of $\Phi$.  We call such matrix functions $\Phi$ \emph{admissible}. 
Observe that this definition is a generalization of the previously mentioned notion of admissibility for scalar-valued functions.
It is easy to see that any matrix function $\Phi\in (H^{\infty}+C)(\M_{m,n})\setminus H^{\infty}(\M_{m,n})$ is admissible
because the essential norm of $H_{\Phi}$ is zero in this case.

To date, the class of admissible bounded matrix functions is the largest for which uniqueness of a superoptimal approximation 
by bounded analytic matrix functions is guaranteed.  This was proved first in \cite{PY1} for the special case of matrix functions 
in $(H^{\infty}+C)(\M_{m,n})\setminus H^{\infty}(\M_{m,n})$, and shortly after in \cite{PT1} for admissible matrix functions.

\subsection{Balanced matrix functions and thematic factorizations}\label{thematicSection}

In \cite{PY1}, very badly approximable matrix functions in $(H^{\infty}+C)(\M_{m,n})$ were characterized algebraically in 
terms of thematic factorizations.  It turns out that this same algebraic characterization remains valid for very badly approximable 
matrix functions which are only admissible.  We first recall several definitions to discuss these factorizations.

Let $I_{n}$ denote the matrix function that equals the $n\times n$ identity matrix on $\T$.  Recall that a matrix function 
$\Theta\in H^{\infty}(\M_{m,n})$ is called \emph{inner} if $\Theta^{*}\Theta=I_{n}$ a.e. on $\T$.  A matrix function $\Upsilon
\in H^{\infty}(\M_{m,n})$ is called \emph{outer} if $\Upsilon H^{2}(\C^{n})$ is dense in $H^{2}(\C^{m})$.  Lastly, a matrix 
function $F\in H^{\infty}(\M_{m,n})$ is called  \emph{co-outer} whenever the transposed function $F^{t}$ is outer.

Let $n\geq2$ and $0<r<n$.  For an $n\times r$ inner and co-outer matrix function  $\Upsilon$, it is known that there is 
an $n\times(n-r)$ inner and co-outer matrix function $\Theta$ such that
\begin{equation}\label{themFunct}
			V=(\,\Upsilon\;\overline{\Theta}\,)
\end{equation} 
is a unitary-valued matrix function on $\T$.  Functions of the form $(\ref{themFunct})$ are called \emph{r-balanced}.  We 
refer the reader to Chapter 14 in \cite{Pe} for a detailed presentation of many interesting properties of $r$-balanced matrix 
functions.  

Our main interest lies with 1-balanced matrix functions, which are also referred to as \emph{thematic}.  

\begin{definition}
			A \emph{partial thematic factorization} of an $m\times n$ matrix function is a factorization of the form
			\begin{equation}\label{partialFact}
						W^{*}_{0}\cdot\ldots\cdot W^{*}_{r-1}\left(
						\begin{array}{ccccc}
									t_{0}u_{0}	&	\Zero				&	\ldots	&	\Zero						&	\Zero\\
									\Zero				&	t_{1}u_{1}	&	\ldots	&	\Zero						&	\Zero\\
									\vdots			&	\vdots			&	\ddots	&	\vdots					&	\vdots\\
									\Zero				&	\Zero				&	\ldots	&	t_{r-1}u_{r-1}	&	\Zero\\
									\Zero				&	\Zero				&	\ldots	&	\Zero						&	\Psi
						\end{array}\right)V^{*}_{r-1}\cdot\ldots\cdot V^{*}_{0}
			\end{equation}
			where the numbers $t_{0},t_{1},\ldots,t_{r-1}$ satisfy 
			\[	t_{0}\geq t_{1}\geq\ldots\geq t_{r-1}> 0;	\]
			the function $u_{j}$ is unimodular and such that the Toeplitz operator $T_{u_{j}}$ is Fredholm with positive index,
			for $0\leq j\leq r-1$; the $n\times n$ matrix function $V_{j}$ and $m\times m$ matrix function $W_{j}$ have the form
			\begin{equation}\label{almostThematic}
						V_{j}=\left(\begin{array}{cc}
															I_{j}	&	\Zero\\
															\Zero	&	\breve{V}_{j}
												\end{array}\right)\,\mbox{ and }\;
						W_{j}=\left(\begin{array}{cc}
															I_{j}	&	\Zero\\
															\Zero	&	\breve{W}_{j}
												\end{array}\right),
			\end{equation}
			for some thematic matrix functions $\breve{V}_{j}$ and $\breve{W}_{j}^{t}$, respectively, for $1\leq j<r-1$;  
			$V_{0}$ and $W_{0}^{t}$ are thematic matrix functions; and the matrix function $\Psi$ satisfies 
			\begin{equation}\label{PsiCondition}
						\|\Psi\|_{L^{\infty}(\M_{m-r,n-r})}\leq t_{r-1}\,	\mbox{ and }\;\|H_{\Psi}\|<t_{r-1}.
			\end{equation}
			The positive integers $k_{0},\ldots, k_{r-1}$ defined by
			\[	k_{j}=\ind T_{u_{j}},\;\mbox{ for }0\leq j\leq r-1,	\]
			are called the \emph{thematic indices} associated with the factorization in $(\ref{partialFact})$.		
\end{definition}

As usual, if $r=m$ or $r=n$, we use the convention that the corresponding row or column does not exist.

\begin{definition}
			A \emph{thematic factorization} of an $m\times n$ matrix function is a partial thematic factorization of the form 
			$(\ref{partialFact})$ in which $\Psi$ is identically zero.
\end{definition}

It can be shown that any admissible very badly approximable matrix function admits a thematic factorization. Conversely, any 
matrix function of the form $(\ref{partialFact})$ with $\Psi=\Zero$ is a very badly approximable matrix function whose 
$j$th-superoptimal singular value equals $t_{j}$ for $0\leq j\leq r-1$.  See Chapter 14 of \cite{Pe} for proofs of these results.

In \cite{PY1}, it was observed that thematic indices depend on the choice of the thematic factorization.  However, it 
was conjectured there that the sum of the thematic indices associated with any thematic factorization of a given very badly 
approximable matrix function $\Phi$ depends only on $\Phi$ (and is therefore independent of the choice of a thematic 
factorization) whenever $\Phi$ belongs to $(H^{\infty}+C)(\M_{m,n})$.  This conjecture was settled in the affirmative 
shortly after in \cite{PY2}.  Moreover, it was shown in \cite{PT1} that this conjecture remains valid for matrix functions 
$\Phi$ which are merely admissible.

The result concerning the sum of thematic indices of $\Phi$ leads to the question:  \emph{Can one arbitrarily distribute this 
sum among thematic indices of $\Phi$ by choosing an appropriate thematic factorization?} A partial answer was given in \cite{AP2} 
in terms of monotone partial thematic factorizations.

\begin{definition}
			A partial thematic factorization of the form $(\ref{partialFact})$ is called \emph{monotone non-increasing} (or non-decreasing) 
			if for any superoptimal singular value $t$, such that $t\geq t_{r-1}$, the thematic indices $k_{j},k_{j+1},\ldots,k_{s}$ that 
			correspond to all of the superoptimal singular values that are equal to $t$ form a monotone non-increasing sequence 
			(or non-decreasing sequence).
\end{definition}

\begin{rmk}
			Note that only monotone \emph{non-increasing} partial thematic factorizations were considered in \cite{AP2}.
\end{rmk}

The following result was established in \cite{AP2}.

\begin{fact}
			If $\Phi\in L^{\infty}(\M_{m,n})$ is an admissible very badly approximable matrix function,
			then $\Phi$ possesses a \emph{monotone non-increasing} thematic factorization.  Moreover, the indices 
			of any monotone non-increasing thematic factorization are uniquely determined by $\Phi$.
\end{fact}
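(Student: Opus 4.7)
The plan for \emph{existence} is to begin from an arbitrary thematic factorization of $\Phi$ (one exists by the general theory recalled in Chapter 14 of \cite{Pe}) and to rearrange the thematic factors within each ``level block'' of superoptimal singular values. Partition the indices $\{0,\dots,r-1\}$ into maximal consecutive blocks on which $t_j$ is constant. Within any such block $B$, the aim is to reorder the tuples $(u_j,V_j,W_j)_{j\in B}$ so that the indices $k_j=\ind T_{u_j}$ form a non-increasing sequence. I would reduce this to a \emph{swap lemma}: if $j$ and $j+1$ lie in the same block and $k_j<k_{j+1}$, then there exists another thematic factorization of $\Phi$ with the same data outside the pair $\{j,j+1\}$ and with $k_j$ and $k_{j+1}$ interchanged. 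A standard bubble-sort then converts any thematic factorization into a monotone non-increasing one.

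The swap lemma is the core technical step. The problem localizes to analyzing the $2\times2$ sub-product of $(\ref{partialFact})$ at positions $j,j+1$, since the block-identity structure of the outer factors $V_i,W_i$ for $i<j$ can be preserved. The resulting local object is a $2\times2$ very badly approximable matrix function with two equal superoptimal singular values $t_j=t_{j+1}=t$. I would then show that for such a $2\times2$ function one can always choose a thematic factorization whose first index equals the larger of the two candidate indices, exploiting the freedom in picking a maximizing vector of the associated Hankel operator: the maximizing subspace at level $t$ is rich enough to allow one to select a unimodular $u$ with $\ind T_u=\max\{k_j,k_{j+1}\}$ as the top factor, and then complete it to a thematic triple.

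For \emph{uniqueness}, the plan is to characterize each thematic index $k_j$ of a monotone non-increasing factorization as an invariant of $\Phi$ alone. The total sum $\sum_i k_i$ depends only on $\Phi$ by \cite{PY2,PT1}; applying the same circle of ideas level-by-level shows that $\sum_{i\in B}k_i$ is invariant for each block $B$. Within a given block, I would argue that the \emph{largest} possible thematic index appearing at level $t$ in any thematic factorization is itself an invariant of $\Phi$, identifying it with the dimension of a suitable invariant subspace built from the spectral data of $H_\Phi^{*}H_\Phi-t^{2}I$. A non-increasing factorization necessarily realizes this maximum at the first position of the block; peeling off that factor reduces the problem to an $(m-1)\times(n-1)$ admissible very badly approximable $\Psi$, and induction determines the remaining indices in decreasing order.

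The main obstacle I anticipate is the swap lemma, specifically the $2\times2$ structural analysis: controlling the thematic forms of the new factors $V'_j,V'_{j+1},W'_j,W'_{j+1}$ so that they still satisfy the block pattern in $(\ref{almostThematic})$ and that the replacement unimodular functions are still Fredholm symbols with the interchanged indices. The uniqueness half, while conceptually cleaner, then hinges on obtaining a genuine invariant description of the maximal admissible thematic index at each superoptimal level, which will likely require combining maximizing-vector arguments for $H_\Phi$ with the invariance of the total sum of indices in a somewhat delicate bookkeeping.
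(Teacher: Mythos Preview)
Your swap lemma is false, and this breaks the existence argument. Take the very paper's Example \ref{firstEx} (or more abstractly, Theorem \ref{unitarySeq}): an admissible unitary-valued $2\times 2$ function $U$ with monotone non-increasing sequence $(3,2)$ has $(1,4)$ among its sequences of thematic indices, but $(4,1)$ is \emph{not} a sequence of thematic indices for $U$, since the first index in any factorization is bounded above by $\iota(H_{U})=3$. Thus one cannot in general interchange adjacent indices $k_j<k_{j+1}$ within a level block. The bubble-sort strategy therefore fails at the very first step.

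Relatedly, the claim that the swap localizes to a $2\times 2$ sub-product misreads the structure of (\ref{partialFact}): after peeling off $V_0,\dots,V_{j-1}$ and $W_0,\dots,W_{j-1}$ one is left with an $(m-j)\times(n-j)$ matrix function, not a $2\times 2$ one; the thematic matrices $\breve V_j,\breve W_j$ genuinely mix all remaining coordinates.

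The approach actually used in \cite{AP2} (and recalled in Section~\ref{SectionMonoFactorization} of this paper) is not a rearrangement of a given factorization but a \emph{greedy construction}: one proves that every factorization of the form (\ref{badG}) satisfies $\ind T_u\le \iota(H_G)$, that equality can be achieved, and that for a factorization achieving equality the residual block $\Psi$ satisfies $\iota(H_\Psi)\le\iota(H_G)$. Iterating produces the non-increasing sequence $(\iota(H_{G_0}),\iota(H_{G_1}),\dots)$ directly. Your uniqueness sketch is closer in spirit to this, but the assertion ``a non-increasing factorization necessarily realizes this maximum at the first position'' is exactly the nontrivial point; it does not follow from the invariance of the total sum alone and requires the $\iota$-invariant together with an argument (via $z^{c}G$ for suitable $c$) comparing an arbitrary non-increasing factorization to the greedy one. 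Also note that the ``peeled'' block $\Psi$ depends on the chosen factorization, so the inductive reduction must be phrased in terms of invariants of $G$ rather than of $\Psi$.
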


Hence, one cannot arbitrarily distribute the sum of thematic indices of an admissible very badly approximable matrix function 
among thematic indices in \emph{non-increasing order}.  Indeed, thematic indices are uniquely determined when arranged in this way.

We refer the reader to \cite{Pe} for more information and proofs of all previously mentioned facts concerning Hankel 
operators, superoptimal approximation, and thematic factorizations of admissible very badly approximable matrix functions.  
See also \cite{AP2} and \cite{PT2} for other characterizations of badly and very badly approximable matrix functions.

\subsection{What is done in this note?}

Consider now the following example.  Let $G$ be the $2\times 2$-matrix function defined by
\begin{equation*}
			G=\frac{1}{\sqrt{2}}\left(	\begin{array}{cc}
																				\bar{z}	&	-1\\
																				1				&	z
																	\end{array}\right)\left(
																	\begin{array}{cc}
																				\bar{z}^{2}	&	\Zero\\
																				\Zero				&	\bar{z}
																	\end{array}\right)=\frac{1}{\sqrt{2}}\left(
																	\begin{array}{cc}
																				\bar{z}^{3}	&	-\bar{z}\\
																				\bar{z}^{2}	&	1
																	\end{array}\right).
\end{equation*}
Clearly, $G$ is a very badly approximable continuous (and so admissible) function in its non-increasing monotone thematic 
factorization with thematic indices $2$ and $1$.  We now ask the question:  Does $G$ admit a monotone \emph{non-decreasing} 
thematic factorization?  

It is easy to verify that $G$ can also be factored as
\begin{equation*}
			G=\left(\begin{array}{cc}
										-1		&	\Zero\\
										\Zero	&	1
							\end{array}\right)
			\left(\begin{array}{cc}
										\bar{z}	&	\Zero\\
										\Zero		&	\bar{z}^{2}
							\end{array}\right)\frac{1}{\sqrt{2}}\left(
							\begin{array}{cc}
										-\bar{z}^{2}	&	1\\
										1						&	z^{2}
							\end{array}\right)
\end{equation*}
demonstrating that $G$ does admit a monotone non-decreasing thematic factorization with thematic indices $1$ and $2$.
Thus,  the natural question arises: \emph{Does every admissible very badly approximable matrix function admit a monotone 
\emph{non-decreasing} thematic factorization?  If so, are the thematic indices in any such factorization uniquely 
determined by the matrix function itself?}  We succeed in providing answers to these questions.

We begin Section $\ref{SectionScalar}$ introducing sufficient conditions under which the Toeplitz operator induced by a 
unimodular function is invertible.  For the reader's convenience, we also state some well-known theorems on the factorization 
of certain unimodular functions. 

In Section $\ref{SectionBadlyApp}$, we establish new results on badly approximable matrix functions.  We prove that given a 
(partial) thematic factorization of a badly approximable matrix function $G$ whose ``second'' thematic index equals $k$ and 
an integer $j$ satisfying $1\leq j\leq k$, it is possible to find a new (partial) thematic factorization of $G$ in which the 
``first'' new thematic index equals $j$.  We then give further analysis of the ``lower block'' obtained in this new 
factorization of $G$.  It is shown that, under rather natural assumptions, the first thematic index of the new lower block 
is indeed the first thematic index of $G$ in the originally given thematic factorization. 

Once these results are available, we argue in Section $\ref{SectionMonoFactorization}$ that there is an abundant number of 
thematic factorizations of an arbitrary (admissible) very badly approximable matrix function.  We begin by proving the existence 
of a monotone non-decreasing thematic factorization for such matrix functions.  In contrast to monotone non-increasing thematic 
factorizations, it is shown that the thematic indices appearing in a monotone non-decreasing thematic factorization are \emph{not} 
uniquely determined by the matrix function itself.  Moreover, we obtain every possible sequence of thematic indices in the case 
of $2\times 2$ unitary-valued matrix functions.  We further prove that one can obtain various thematic factorizations from a 
monotone non-increasing thematic factorization while preserving ``some structure'' of the thematic indices in the case of 
$m\times n$ matrix functions with $\min\{m,n\}\geq2$.  We close the section by illustrating this with a simple example.

In Section \ref{unitarySection}, we provide an algorithm and demonstrate with an example that the algorithm yields a thematic 
factorization for any specified sequence of thematic indices of an arbitrary admissible very badly approximable unitary-valued 
$2\times 2$ matrix function.

\section{Invertibility of Toeplitz operators and factorization of certain unimodular functions}\label{SectionScalar}

In this section, we include some useful and perhaps well-known (to those who work with Toeplitz and Hankel operators on the Hardy 
space $H^{2}$) results regarding scalar functions that are needed throughout the paper.  We begin by introducing sufficient conditions
for which a Toeplitz operator $T_{w}$, where $w$ is a unimodular function on $\T$ (i.e. $w$ has modulus equal to 1 a.e. on $\T$),
is invertible on $H^2$.  Although a complete description of unimodular functions $w$ for which $T_{w}$ is invertible is given by the 
well-known theorem of Devinatz and Widom, the sufficient condition given in Theorem \ref{invThm} below is easier to verify.

\begin{lem}\label{scalarLemma}
			Let $0<p\leq\infty$.  If $h\in H^{p}$ and $1/h\in H^{2}$, then the Toeplitz operator $T_{\bar{h}/h}$ has trivial kernel.
\end{lem}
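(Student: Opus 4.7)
The plan is to show that any $f \in H^2$ with $T_{\bar h/h}f = 0$ must vanish, by arguing that $f/h$ would have Fourier series supported both on $\{n \ge 0\}$ and on $\{n \le -1\}$, hence is zero.

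First, since $|h| = |\bar h|$ a.e.\ on $\T$, the symbol $\bar h/h$ is unimodular, so $T_{\bar h/h}$ is a bona fide bounded operator on $H^{2}$. If $T_{\bar h/h}f = 0$ then $\bar h f/h \in H^{2}_{-}$, so there exists $\psi \in H^{2}$ with $\psi(0)=0$ (that is, $\psi \in zH^{2}$) such that $\bar h f/h = \bar\psi$. Rearranging gives the key identity
\[
\frac{f}{h} \;=\; \overline{\left(\frac{\psi}{h}\right)}.
\]

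Next I would verify that both sides lie in $H^{1}$ via the Smirnov class $\mathcal N^{+}$. Since $h \in H^{p}$ with $1/h \in H^{2}$, the function $h$ is Smirnov (the Nevanlinna class $N$ is a ring containing both $h$ and $1/h$, and the outerness forced by $1/h \in H^{2}$ puts $h$ in $\mathcal N^{+}$), so $1/h \in \mathcal N^{+}$. Thus $f/h$ and $\psi/h$ both lie in $\mathcal N^{+}$; and by the Cauchy--Schwarz inequality applied to $f,\,1/h \in H^{2}$ and to $\psi,\,1/h \in H^{2}$, each is also in $L^{1}(\T)$. The Smirnov theorem ($\mathcal N^{+} \cap L^{1} = H^{1}$) then yields $f/h \in H^{1}$ and $\psi/h \in H^{1}$. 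Moreover, since $\psi \in zH^{2}$, we have $\psi/h \in zH^{1}$, so its Fourier coefficients vanish at all indices $n \le 0$, and consequently $\overline{\psi/h}$ has Fourier coefficients vanishing at all indices $n \ge 0$.

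Combining these two observations: $f/h \in H^{1}$ has Fourier coefficients supported in $\{n \ge 0\}$, while $f/h = \overline{\psi/h}$ has Fourier coefficients supported in $\{n \le -1\}$. Hence $f/h = 0$ a.e., and since $h \neq 0$ a.e.\ on $\T$, we conclude $f = 0$.

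The only non-trivial step is the application of the Smirnov theorem to conclude membership in $H^{1}$ from membership in $\mathcal N^{+} \cap L^{1}$; this is needed because for $p<2$ the function $h$ itself need not lie in $H^{2}$, so one cannot simply cite ``product of $H^{2}$ functions is in $H^{1}$'' for $f/h$. Once that is in hand, the Fourier support argument is immediate.
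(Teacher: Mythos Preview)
Your proof is correct. The core idea---showing $f/h \in H^{1} \cap \bar z\,\overline{H^{1}}$ and hence $f/h = 0$---is exactly the paper's argument in the case $p \ge 2$. The paper, however, splits into two cases: for $p \ge 2$ it argues directly that $f/h$ and $\psi/h$ lie in $H^{1}$ (as products of $H^{2}$ functions), and for $p < 2$ it invokes a separate lemma of Hayashi to reach a contradiction. Your argument handles both cases uniformly, which is a genuine simplification.

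One remark: your detour through the Smirnov class is unnecessary. You write that Smirnov's theorem is needed ``because for $p<2$ the function $h$ itself need not lie in $H^{2}$,'' but the hypothesis already gives $1/h \in H^{2}$, so $f/h = f\cdot(1/h)$ and $\psi/h = \psi\cdot(1/h)$ are each products of two $H^{2}$ functions and hence lie in $H^{1}$ directly---no appeal to $\mathcal N^{+}\cap L^{1} = H^{1}$ is required. (This is precisely how the paper argues in the $p\ge 2$ case, and in fact that argument never uses $h\in H^{2}$ either, so the paper's case split is itself avoidable.) Likewise, $\psi \in zH^{2}$ gives $\psi/h = z\cdot(\psi/z)\cdot(1/h) \in zH^{1}$ immediately.
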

\begin{proof}
			Suppose that $p\geq 2$.  Let $f\in \ker T_{\bar{h}/h}$.  Since $H^{2}_{-}= L^{2}\ominus H^{2}=\bar{z}\overline{H^{2}}$, 
			then $f/h\in (1/\bar{h})\bar{z}\overline{H^{2}}$.  It follows that $f/h\in H^{1}\cap\bar{z}\overline{H^{1}}$ and therefore
			$\ker T_{\bar{h}/h}$ must be trivial, because $H^{1}\cap\bar{z}\overline{H^{1}}$ is trivial.
			
			Suppose now that $h\in H^{p}\setminus H^{2}$ with $0<p<2$.  Assume, for the sake of contradiction, that $\ker T_{\bar{h}/h}$ 
			is non-trivial.  In this case, a simple argument of Hayashi (see the proof of Lemma 5 in \cite{Ha}) shows that there is 
			an outer function $k\in H^{2}$ such that $\bar{h}/h=\bar{k}/k$, and so there is a $c\in\R$ such that $h=ck$, a contradiction
			to the assumption that $h\notin H^{2}$.  Thus $T_{\bar{h}/h}$ must have trivial kernel.
\end{proof}

\begin{thm}\label{invThm}
			Suppose that $h\in H^{2}$ and $1/h\in H^{2}$.  Then the Toeplitz operator $T_{\bar{h}/h}$ has trivial kernel and dense range.
			In particular, if $T_{\bar{h}/h}$ is Fredholm, then $T_{\bar{h}/h}$ is invertible.
\end{thm}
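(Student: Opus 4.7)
The plan is to derive both conclusions of the theorem directly from Lemma \ref{scalarLemma}, invoking only the case $p=2$ which is already established in its proof. Triviality of the kernel of $T_{\bar h/h}$ is immediate: the hypothesis $h\in H^{2}$ and $1/h\in H^{2}$ is precisely the hypothesis of Lemma \ref{scalarLemma} with $p=2$, so $\ker T_{\bar h/h}=\{0\}$.

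For dense range, I would pass to the adjoint. Recall that for a bounded symbol $\varphi\in L^{\infty}$, one has $(T_{\varphi})^{*}=T_{\bar\varphi}$, so here $(T_{\bar h/h})^{*}=T_{h/\bar h}$. The range of $T_{\bar h/h}$ is dense in $H^{2}$ if and only if $\ker T_{h/\bar h}=\{0\}$. The key observation is the symmetry of the hypotheses in $h$ and $1/h$: setting $g\Mydef 1/h$, we have $g\in H^{2}$ and $1/g=h\in H^{2}$, and moreover
\[
\frac{\bar g}{g}=\frac{\overline{1/h}}{1/h}=\frac{1/\bar h}{1/h}=\frac{h}{\bar h}.
\]
Thus $T_{h/\bar h}=T_{\bar g/g}$, and a second application of Lemma \ref{scalarLemma} (again with $p=2$) to the function $g$ yields $\ker T_{\bar g/g}=\{0\}$. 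Consequently, the range of $T_{\bar h/h}$ is dense in $H^{2}$.

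For the last assertion, if in addition $T_{\bar h/h}$ is Fredholm then it has closed range; combined with density, the range equals $H^{2}$. Together with triviality of the kernel, this shows $T_{\bar h/h}$ is bijective, hence invertible on $H^{2}$.

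There is no real obstacle; the only mildly subtle point is noticing that the hypotheses on $h$ are preserved under the reciprocation $h\mapsto 1/h$ and that this interchange intertwines $T_{\bar h/h}$ with its adjoint, which is what converts the kernel statement of Lemma \ref{scalarLemma} into a dense range statement.
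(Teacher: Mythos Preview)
Your proof is correct. The treatment of the kernel is identical to the paper's, but your argument for dense range is genuinely different and arguably cleaner. The paper observes that the hypotheses force $h$ to be outer and then invokes an external result (Theorem~4.4.10 in \cite{Pe}) to conclude that $T_{\bar h/h}$ has dense range. You instead exploit the symmetry $h\leftrightarrow 1/h$ of the hypotheses: since $(T_{\bar h/h})^{*}=T_{h/\bar h}=T_{\bar g/g}$ with $g=1/h$, a second application of Lemma~\ref{scalarLemma} gives triviality of the adjoint kernel and hence dense range. Your route is entirely self-contained---it uses only Lemma~\ref{scalarLemma} and the elementary identity $(T_{\varphi})^{*}=T_{\bar\varphi}$---whereas the paper's route trades this for a citation but makes explicit the (useful, and used later) fact that $h$ is outer. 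The final Fredholm step is the same in both.
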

\begin{proof}
			By Lemma \ref{scalarLemma}, we know that $T_{\bar{h}/h}$ has trivial kernel.   Now, $h\in H^{2}$ and $1/h\in H^{2}$ imply 
			that $h$ is an outer function, and so the fact that $T_{\bar{h}/h}$ has dense range follows from Theorem 4.4.10 in \cite{Pe}. 
			The rest is obvious.
\end{proof}

We now state a useful converse to Theorem \ref{invThm}.

\begin{fact}\label{DWfact}
			If $w$ is a unimodular function on $\T$ such that $T_{w}$ is invertible on $H^{2}$, then $w$ admits a factorization of the 
			form $w=\bar{h}/h$ for some outer function $h$ such that both $h$ and $1/h$ belong to $H^{p}$ for some $2<p\leq\infty$.
\end{fact}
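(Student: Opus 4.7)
The statement is essentially one direction of the Devinatz--Widom characterization of invertibility of Toeplitz operators with unimodular symbol, so the plan is to follow the classical two-part strategy: first construct a candidate outer $h$ algebraically from $T_w^{-1}$, then upgrade its integrability from $H^2$ to $H^p$ via Muckenhoupt $A_2$ theory.

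For the construction, I would exploit that both $T_w$ and $T_w^{*}=T_{\bar w}$ are invertible. Set $q=T_w^{-1}(1)$ and $p=T_{\bar w}^{-1}(1)$, so by definition $wq=1+\beta$ and $\bar w p=1+\gamma$ with $\beta,\gamma\in H^{2}_{-}$. The product $pq=(wq)(\bar w p)=(1+\beta)(1+\gamma)$ lies in $H^{1}$, while expanding gives $pq-1=\beta+\gamma+\beta\gamma\in H^{1}_{-}$ (since $\beta\gamma$ has only Fourier coefficients of index $\leq-2$). Because $H^{1}\cap H^{1}_{-}=\{0\}$, this forces $pq\equiv1$, so $q$ is outer and $1/q=p\in H^{2}$.

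To see that $w=\bar h/h$ for some outer $h$, I would observe that $|wq|=|q|$ on $\T$, hence $1+\bar\beta$ and $q$ are both in $H^2$ with equal modulus; since $q$ is outer, $1+\bar\beta=Iq$ for some inner function $I$. Comparing constant terms gives $I(0)q(0)=1$, and the symmetric argument applied to $\bar w$ yields $|p(0)|\geq1$; coupled with $p(0)q(0)=1$, this forces $|q(0)|=|p(0)|=1$ and hence $|I(0)|=1$, so $I$ reduces to a unimodular constant $c_{0}$. Taking conjugates then gives $wq=\bar c_{0}\bar q$, i.e., $w=\bar c_{0}\bar q/q$; absorbing $c_{0}$ by setting $h=\bar\alpha q$ with $\alpha^{2}=c_{0}$ produces an outer $h$ with $h,1/h\in H^{2}$ and $w=\bar h/h$.

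The final step, upgrading from $H^{2}$ to $H^{p}$ with $p>2$, is the main obstacle. I would invoke that invertibility of $T_{\bar h/h}$ is equivalent (via the Helson--Szeg\H{o} theorem) to $|h|^{2}$ being a Muckenhoupt $A_{2}$-weight on $\T$, and then apply the self-improvement property of $A_{2}$-weights (Hunt--Muckenhoupt--Wheeden / reverse H\"older inequality) to conclude $|h|^{2}\in A_{2-\varepsilon}$ for some $\varepsilon>0$, hence $h,1/h\in H^{2+\delta}$ for some $\delta>0$. While the preceding algebraic manipulations are elementary Hardy-space calculations, this jump to $H^{p}$ draws on genuine real-variable harmonic analysis and is the non-trivial heart of the result.
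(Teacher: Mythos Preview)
The paper does not prove this statement: it is recorded as a \emph{Fact}, with the remark that it can be deduced from the Devinatz--Widom theorem and a pointer to Chapter~3 of \cite{Pe}. Your proposal therefore goes beyond what the paper itself offers, and it is essentially the classical argument one finds in that reference. The algebraic construction of the outer function from $q=T_w^{-1}1$ and $p=T_{\bar w}^{-1}1$, the identity $pq=1$ via $H^{1}\cap H^{1}_{-}=\{0\}$, and the inner--outer comparison forcing the inner factor $I$ to be a unimodular constant are all correct (with the cosmetic correction that one needs $\alpha^{2}=\bar c_{0}$ rather than $\alpha^{2}=c_{0}$ to obtain $w=\bar h/h$). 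The upgrade from $H^{2}$ to $H^{p}$ for some $p>2$, via the Helson--Szeg\H{o}/$A_{2}$ characterization of invertibility and the self-improvement of Muckenhoupt weights (reverse H\"older), is exactly the standard route and is correctly identified as the substantive analytic step.
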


This result can be deduced from the theorem of Devinatz and Widom mentioned earlier.  A proof can be found in Chapter 3 of \cite{Pe}.

We now state two useful, albeit immediate, implications of Fact \ref{DWfact}.

\begin{cor}
			Suppose that $h$ and $1/h$ belong to $H^{2}$.  If the Toeplitz operator $T_{\bar{h}/h}$ is Fredholm, then $h$ and $1/h$ 
			belong to $H^{p}$ for some $2<p\leq\infty$.
\end{cor}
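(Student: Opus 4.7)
The plan is to chain the two results just established, Theorem \ref{invThm} and Fact \ref{DWfact}, and then bootstrap from a factorization of $\bar h/h$ given by Fact \ref{DWfact} back to $h$ itself. First I would invoke Theorem \ref{invThm}: since $h$ and $1/h$ belong to $H^{2}$, the operator $T_{\bar h/h}$ has trivial kernel and dense range; being moreover Fredholm by hypothesis, it is invertible. Next I would apply Fact \ref{DWfact} to the unimodular symbol $w=\bar h/h$ to produce an outer function $g$ with $g,1/g\in H^{p}$ for some $p\in(2,\infty]$ and
\[
\frac{\bar h}{h}=\frac{\bar g}{g}\quad\text{a.e.\ on }\T.
\]

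The remaining task is to transfer the integrability of $g$ to $h$. The relation above is equivalent to $\bar h g=\bar g h$, i.e., the boundary function $f\Mydef h/g$ is real-valued a.e.\ on $\T$. Since $h$ is outer (because $h,1/h\in H^2$) and $1/g\in H^{p}$ with $p>2$, Hölder's inequality in Hardy spaces gives $f=h\cdot(1/g)\in H^{q}$ for some exponent $q>1$ satisfying $1/q=1/2+1/p$. In particular, $f$ lies in $H^{1}$, so its Fourier coefficients $\hat f(n)$ vanish for $n<0$. The reality condition $f=\bar f$ a.e.\ on $\T$ forces $\hat f(n)=\overline{\hat f(-n)}=0$ for $n>0$ as well, so $f$ is a.e.\ equal to the real constant $c\Mydef\hat f(0)$. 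Since $h$ and $g$ are non-zero, $c\neq 0$, and the identity $h=cg$ immediately yields $h,1/h\in H^{p}$, as required.

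The only point requiring real care is the passage from $\bar h/h=\bar g/g$ to a rigid identity $h=cg$; once one observes that the quotient $h/g$ lies in a Hardy space of positive exponent and is real on the circle, the Fourier-coefficient argument closes things cleanly, so I do not anticipate any serious obstacle beyond double-checking that $h\cdot(1/g)$ really is of class $H^{q}$ with $q\geq 1$ (which is the reason Fact \ref{DWfact} provides $p$ strictly greater than $2$ rather than merely $p=2$).
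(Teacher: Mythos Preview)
Your argument is correct and is exactly what the paper has in mind: the corollary is stated there as an immediate consequence of Theorem \ref{invThm} and Fact \ref{DWfact}, with no further proof given. The bootstrap step you spell out---that $\bar h/h=\bar g/g$ forces $h=cg$ for a real constant $c$---is precisely the observation already used (and attributed to Hayashi) in the proof of Lemma \ref{scalarLemma}, so the paper evidently regards it as routine.
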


\begin{cor}\label{badFact}
			Let $u$ be a unimodular function on $\T$.  If the Toeplitz operator $T_{u}$ is Fredholm with index $k$, then there is an
			outer function $h$ such that 
			\begin{equation}\label{scalarRep}
						u=\bar{z}^{k}\frac{\bar{h}}{h}
			\end{equation}
			and both $h$ and $1/h$ belong to $H^{p}$ for some $2<p\leq\infty$.
\end{cor}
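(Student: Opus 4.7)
The plan is to reduce the corollary to Fact \ref{DWfact} by stripping the index off of $u$. Set $v = z^{k} u$; since $u$ is unimodular, so is $v$. Because $z^{k}\in H^{\infty}$, the product formula for Toeplitz operators gives
\[
T_{v} \;=\; T_{u}T_{z^{k}}.
\]
The operator $T_{z^{k}}$ is simply multiplication by $z^{k}$ on $H^{2}$: it is injective with cokernel spanned by $1,z,\ldots,z^{k-1}$, hence Fredholm with $\ind T_{z^{k}} = -k$. Thus $T_{v}$ is Fredholm as a product of Fredholm operators, and
\[
\ind T_{v} \;=\; \ind T_{u} + \ind T_{z^{k}} \;=\; k - k \;=\; 0.
\]

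The key step is to promote ``Fredholm of index zero'' to ``invertible'', and for this I would invoke Coburn's theorem (see Chapter 3 of \cite{Pe}): for any nonzero $\varphi\in L^{\infty}$, at least one of $\ker T_{\varphi}$ and $\ker T_{\bar{\varphi}}$ is trivial. Applied to $v$, and noting that $T_{v}^{*}=T_{\bar{v}}$, one of $\dim\ker T_{v}$ and $\dim\ker T_{v}^{*}$ vanishes. Since these two finite dimensions differ by $\ind T_{v}=0$, they are equal, so both must be zero. A Fredholm operator of index zero with trivial kernel is invertible; hence $T_{v}$ is invertible on $H^{2}$.

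Fact \ref{DWfact} then supplies an outer function $h$ with $h,\,1/h \in H^{p}$ for some $2<p\leq\infty$, satisfying $v = \bar{h}/h$. Multiplying by $\bar{z}^{k}$ yields
\[
u \;=\; \bar{z}^{k} v \;=\; \bar{z}^{k}\,\frac{\bar{h}}{h},
\]
which is exactly the representation $(\ref{scalarRep})$. The only non-computational ingredient is Coburn's theorem (to bridge index zero and invertibility); everything else — the product formula for Toeplitz operators with one $H^{\infty}$ factor, additivity of the Fredholm index, and the direct application of Fact \ref{DWfact} — is routine.
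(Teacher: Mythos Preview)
Your argument is correct and is exactly the kind of reduction the paper has in mind: the corollary is stated there as an ``immediate'' implication of Fact~\ref{DWfact} with no proof given, and your route---multiply by $z^{k}$ to kill the index, use Coburn's lemma to upgrade ``Fredholm of index zero'' to ``invertible'', then apply Fact~\ref{DWfact}---is the standard way to make that immediacy explicit. One small remark: writing $z^{k}\in H^{\infty}$ tacitly assumes $k\geq 0$, which is all that is ever used in the paper; for negative $k$ you would instead use $T_{\bar z^{|k|}u}=T_{\bar z^{|k|}}T_{u}$ and the same Coburn argument goes through.
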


\begin{rmk}\label{classR}
			Even though representation $(\ref{scalarRep})$ is very useful (e.g. in the proof of Theorem \ref{MainThm}), it may be 
			difficult to find the function $h$ explicitly, if needed.  This is however a very easy task for unimodular functions 
			in the space $\cR$ of rational functions with poles outside of $\T$.  After all, if $u\in\cR$, then there are finite 
			Blaschke products $B_{1}$ and $B_{2}$ such that $u=\bar{B}_{1}B_{2}$, by the Maximum Modulus Principle.  Thus, $u$ 
			admits a representation of the form $(\ref{scalarRep})$ with $k=\deg B_{1}-\deg B_{2}$ for some function $h$ invertible 
			in $H^{\infty}$ (which is, up to a multiplicative constant, a product of quotients of reproducing kernels of $H^{2}$).
\end{rmk}

We also find the classification of admissible scalar badly approximable functions mentioned in Section \ref{intro} and 
Remark \ref{classR} useful in proving the next theorem which is part of the lore of our subject.

\begin{thm}\label{scalarFact}
			Suppose that $u\in\cR$ is a unimodular function on $\T$. Then $u$ is badly approximable if and only if there are 
			finite Blaschke products $B_{1}$ and $B_{2}$ such that $\deg B_{1}>\deg B_{2}$ and $u=\bar{B}_{1}B_{2}$ on $\T$.  
			In particular, $u$ admits the representation
			\[	u=\bar{z}^{k}\frac{\bar{h}}{h}	\]
			with $k=\ind T_{u}=\deg B_{1}-\deg B_{2}$ for some function $h$ invertible in $H^{\infty}$.
\end{thm}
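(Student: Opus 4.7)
The plan is to combine the classification of admissible scalar badly approximable functions stated in Section~\ref{intro} with the rational unimodular factorization from Remark~\ref{classR}. Since $u\in\cR$ has no poles on $\T$ and satisfies $|u|=1$ on $\T$, the function $u$ is continuous on $\T$; in particular $u\in H^{\infty}+C$, so $H_u$ is compact, $u$ is admissible, and $T_u$ is Fredholm (its symbol being continuous and non-vanishing on $\T$). By the scalar classification recalled in Section~\ref{intro}, $u$ is then badly approximable if and only if $\ind T_u>0$.

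Applying the Maximum Modulus Principle to $u$ and to $\bar u=1/u$, as in Remark~\ref{classR}, one obtains finite Blaschke products $B_1,B_2$ with $u=\bar B_1 B_2$ on $\T$. The key index identity
\[
\ind T_{\bar B_1 B_2}=\deg B_1-\deg B_2
\]
then converts the condition $\ind T_u>0$ into $\deg B_1>\deg B_2$; it follows from the classical formula $\ind T_v=-\mathrm{wind}(v)$ for continuous non-vanishing unimodular $v$, together with $\mathrm{wind}(\bar B_1 B_2)=-\deg B_1+\deg B_2$. This establishes both implications of the first statement.

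For the explicit representation $u=\bar z^k\bar h/h$ with $h$ invertible in $H^{\infty}$, I would unpack the construction of Remark~\ref{classR}. Writing the elementary Blaschke factor $b_a(z)=(z-a)/(1-\bar a z)$ and computing on $\T$ yields
\[
\overline{b_a(z)}=\bar z\cdot\frac{h_a(z)}{\overline{h_a(z)}},\qquad h_a(z)=1-\bar a z,
\]
with $h_a$ a polynomial whose only zero $1/\bar a$ lies outside $\overline{\D}$, so that $h_a,1/h_a\in H^{\infty}$. Writing $B_1=\prod_i b_{a_i}$ and $B_2=\prod_j b_{c_j}$ and setting $H_1=\prod_i h_{a_i}$, $H_2=\prod_j h_{c_j}$, multiplying the resulting identities produces $u=\bar z^k\bar h/h$ with $k=\deg B_1-\deg B_2$ and $h=H_2/H_1$, a rational function invertible in $H^{\infty}$. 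The only delicate aspect is keeping track of conjugates in this last step; otherwise everything reduces to ingredients already in place earlier in the paper.
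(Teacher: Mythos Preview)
Your proof is correct and follows exactly the route the paper indicates: the paper does not actually write out a proof of this theorem, remarking only that it is ``part of the lore'' and follows from the classification of admissible badly approximable scalar functions in Section~\ref{intro} together with Remark~\ref{classR}. Your argument is precisely the natural expansion of that hint---continuity of $u$ gives admissibility and Fredholmness of $T_u$, the scalar classification reduces bad approximability to $\ind T_u>0$, the factorization $u=\bar B_1 B_2$ from Remark~\ref{classR} plus the winding-number formula identifies the index with $\deg B_1-\deg B_2$, and the elementary identity $\overline{b_a}=\bar z\,h_a/\overline{h_a}$ on $\T$ yields the explicit representation with $h=H_2/H_1$ invertible in $H^\infty$.
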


\section{Badly approximable matrix functions}\label{SectionBadlyApp}

Recall that for $T:X\rightarrow Y$, a bounded linear operator between normed spaces $X$ and $Y$, a vector $x\in X$ is 
called a \emph{maximizing vector of $T$} if $x$ is non-zero and $\|Tx\|_{Y}=\|T\|\cdot\|x\|_{X}$.

\begin{definition}\label{mDef}
			For a matrix function $\Phi\in L^{\infty}(\M_{m,n})$ such that $\|H_{\Phi}\|_{\rm e}<\|H_{\Phi}\|$, we define \emph{the 
			space $\cM_{\Phi}$ of maximizing vectors of $H_{\Phi}$} by
			\[	\cM_{\Phi}\Mydef\{	f\in H^{2}(\C^{n}): \|H_{\Phi}f\|_{2}=\|H_{\Phi}\|\cdot\|f\|_{2}	\}.	\]
\end{definition}
It is easy to show that $M_{\Phi}$ is a closed subspace which consists of the zero vector and all maximizing vectors 
of the Hankel operator $H_{\Phi}$.  Moreover, $\cM_{\Phi}$ always contains a maximizing vector of $H_{\Phi}$ because 
$\|H_{\Phi}\|_{\rm e}<\|H_{\Phi}\|$;  a consequence of the spectral theorem for bounded self-adjoint operators.

We now review results concerning badly approximable matrix functions that are used in this section.  Let 
$G\in L^{\infty}(\M_{m,n})$ be a badly approximable function such that $\|H_{G}\|_{\rm e}<1$ and $\|H_{G}\|=1$. 
In this case, it is not difficult to show that if $f$ is a non-zero function in $\cM_{G}$, then $G f\in H^{2}_{-}(\C^{m})$, 
$\|G(\zeta)\|_{\M_{m,n}}=1$ for a.e. $\zeta\in\T$, and $f(\zeta)$ is a maximizing vector of $G(\zeta)$ for a.e. $\zeta\in\T$
(see Theorem 3.2.3 in \cite{Pe} for a proof).

These results can be used to deduce that $G$ admits a factorization of the form
\begin{equation}\label{badG}
			W^{*}\left(	\begin{array}{cc}
												u			&	\Zero\\
												\Zero	&	\Psi
									\end{array}\right) V^{*},
\end{equation}
where $u=\bar{z}\bar{\theta}\bar{h}/h$, $h$ is an outer function in $H^{2}$, $\theta$ is an inner function, $V=(\,v\;\bar{\Theta}\,)$
and $W^{t}=(\,w\;\bar{\Xi}\,)$ are thematic, and $\Psi\in L^{\infty}(\M_{m-1,n-1})$ satisfies $\|\Psi\|_{L^{\infty}(\M_{m-1,n-1})}\leq 1$.  
Conversely, it is easy to verify that any matrix function which admits a factorization of this form is badly approximable.

For the same matrix function $G$, it can also be shown that the Toeplitz operator $T_{u}$ is Fredholm with positive 
index, $\|H_{\Psi}\|_{\rm e}\leq\|H_{G}\|_{\rm e}$, and the matrix functions $\Theta$ and $\Xi$ are left-invertible in $H^{\infty}$,
i.e. there are matrix functions $A$ and $B$ in $H^{\infty}$ such that $A\Theta=I_{n-1}$ and $B\Xi=I_{m-1}$ hold.  

We refer the reader to Chapter 2 and Chapter 14 of \cite{Pe} for proofs of the previously mentioned results.

\begin{lem}\label{MainLemma}
			Suppose that $G\in L^{\infty}(\M_{m,n})$ is a matrix function of the form
			\begin{equation*}
						G=W^{*}\left(	\begin{array}{cc}
																u	&	\Zero\\
																\Zero	&	\Psi
															\end{array}\right) V^{*},
			\end{equation*}
			where $u$ is a unimodular function such that the Toeplitz operator $T_{u}$ is Fredholm with $\ind T_{u}\geq 0$, 
			$\Psi\in L^{\infty}(\mathbb{M}_{m-1,n-1})$ satisfies $\|\Psi\|_{L^{\infty}(\M_{m-1,n-1})}\leq 1$,  the matrix functions 
			$V=(\,v\;\bar{\Theta}\,)$ and $W^{t}=(\,w\;\bar{\Xi}\,)$ are thematic, and the bounded analytic 
			matrix functions $\Theta$ and $\Xi$ are left-invertible in $H^{\infty}$. Let $A$ and $B$ be left-inverses for $\Theta$ 
			and $\Xi$ in $H^{\infty}$, respectively, and $\xi\in\ker T_{\Psi}$.
			\begin{itemize}
						\item[1.]	If $\xi$ is co-outer, then $A^{t}\xi+a v$ is co-outer for any $a\in H^{2}$.
						\item[2.]	For $a\in H^{2}$, $A^{t}\xi+a v$ belongs to $\ker T_{G}$ if and only if $a$ satisfies
											\begin{equation}\label{PYequation}
														T_{u}a=\pP_{+}(w^{t}B^{*}\Psi\xi-u v^{*}A^{t}\xi).
											\end{equation}
			\end{itemize}
			Moreover, if $\xi_{\#}\Mydef A^{t}\xi+a v$ with $a\in H^{2}$ satisfying $(\ref{PYequation})$, then
			\begin{itemize}
						\item[3.]	$\eta_{\#}\Mydef\bar{z}\bar{G}\bar{\xi}_{\#}$ is co-outer whenever $\bar{z}\overline{\Psi \xi}$ 
											is co-outer, and
						\item[4.]	$\xi_{\#}\in\cM_{G}$ whenever $\xi\in\cM_{\Psi}$ and $\|H_{\Psi}\|=1$.
			\end{itemize}
\end{lem}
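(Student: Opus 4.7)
The plan is to derive all four parts from a single computation of $G\xi_{\#}$. Using the factorization of $G$ together with the relations $\Theta^{t}A^{t}=I_{n-1}$ and $\Theta^{t}v=0$ (from $V^{*}V=I_{n}$), and $w^{t}\bar{w}=1$, $w^{t}\Xi=0$, $\Xi^{*}\bar{w}=0$, $\Xi^{*}\Xi=I_{m-1}$, $\bar{w}w^{t}+\Xi\Xi^{*}=I_{m}$ (from $W^{*}W=WW^{*}=I_{m}$), a short direct computation gives
\[
V^{*}\xi_{\#}=\binom{a+v^{*}A^{t}\xi}{\xi}\quad\mbox{and}\quad G\xi_{\#}=\bar{w}\,u(a+v^{*}A^{t}\xi)+\Xi\Psi\xi.
\]
Parts 1 and 3 then follow by the same "scalar inner divisor" trick. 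For 1, if $\xi_{\#}=\theta g$ with $\theta$ scalar inner and $g\in H^{2}(\C^{n})$, multiplying on the left by $\Theta^{t}$ yields $\xi=\theta\,\Theta^{t}g$, and co-outerness of $\xi$ forces $\theta$ to be constant. For 3, if $\eta_{\#}=\theta g$ with $g\in H^{2}(\C^{m})$, then $G\xi_{\#}=\bar{z}\bar{\theta}\bar{g}$; applying $\Xi^{*}$ (which kills $\bar{w}$ and left-inverts $\Xi$) gives $\Psi\xi=\bar{z}\bar{\theta}\,\overline{\Xi^{t}g}$, and conjugating produces $\bar{z}\overline{\Psi\xi}=\theta\,(\Xi^{t}g)$, so the co-outer assumption on $\bar{z}\overline{\Psi\xi}$ forces $\theta$ to be constant.

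Part 2 is the technical heart of the lemma. From $\pP_{+}G\xi_{\#}=\Zero$ and the easy observation $\bar{w}\cdot H^{2}_{-}\subset H^{2}_{-}(\C^{m})$, the condition reduces to the vector identity
\[
T_{\bar{w}}\bigl(T_{u}a+\pP_{+}(uv^{*}A^{t}\xi)\bigr)=-\pP_{+}(\Xi\Psi\xi)\quad\mbox{in }H^{2}(\C^{m}).
\]
The bridge from this vector equation to the scalar equation $(\ref{PYequation})$ is the algebraic identity $\bar{w}w^{t}B^{*}=B^{*}-\Xi$, obtained from $\bar{w}w^{t}=I_{m}-\Xi\Xi^{*}$ combined with $\Xi^{*}B^{*}=(B\Xi)^{*}=I_{m-1}$. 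Multiplying by $\Psi\xi$, using $B^{*}\Psi\xi\in H^{2}_{-}(\C^{m})$ (since $B^{*}\in\overline{H^{\infty}}$ and $\Psi\xi\in H^{2}_{-}$), applying $\pP_{+}$, and invoking the scalar identity $\pP_{+}(\bar{w}h)=T_{\bar{w}}\pP_{+}h$ yields
\[
T_{\bar{w}}\pP_{+}(w^{t}B^{*}\Psi\xi)=-\pP_{+}(\Xi\Psi\xi).
\]
Finally, $T_{\bar{w}}$ is injective: its adjoint $T_{w^{t}}$ acts as multiplication by $w^{t}$ on $H^{2}(\C^{m})$ and has dense range precisely because $w$ is co-outer. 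Cancelling $T_{\bar{w}}$ between the two displayed vector equations gives $T_{u}a+\pP_{+}(uv^{*}A^{t}\xi)=\pP_{+}(w^{t}B^{*}\Psi\xi)$, which is $(\ref{PYequation})$. The main obstacle is isolating the identity $\bar{w}w^{t}B^{*}=B^{*}-\Xi$ as the correct device that converts the vector equation into the advertised scalar equation.

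Part 4 is a pointwise norm identity. The orthogonality $w^{t}\Xi=0$ together with $\|\bar{w}\|=1$ and $\Xi^{*}\Xi=I_{m-1}$ makes the two summands in $G\xi_{\#}$ pointwise orthogonal and yields
\[
\|G\xi_{\#}(\zeta)\|^{2}=|a+v^{*}A^{t}\xi|^{2}+\|\Psi\xi\|^{2}\quad\mbox{a.e.\ on }\T.
\]
Likewise, $V$ being unitary-valued together with the decomposition of $V^{*}\xi_{\#}$ computed above gives $\|\xi_{\#}(\zeta)\|^{2}=|a+v^{*}A^{t}\xi|^{2}+\|\xi\|^{2}$. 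Integrating and using $\xi\in\cM_{\Psi}$ with $\|H_{\Psi}\|=1$ — so that $\Psi\xi=H_{\Psi}\xi$ satisfies $\|\Psi\xi\|_{2}=\|\xi\|_{2}$ — produces $\|G\xi_{\#}\|_{2}=\|\xi_{\#}\|_{2}$. Since $\xi_{\#}\in\ker T_{G}$, we have $G\xi_{\#}\in H^{2}_{-}(\C^{m})$, whence $H_{G}\xi_{\#}=G\xi_{\#}$; combined with $\xi_{\#}\neq\Zero$ (because $\Theta^{t}\xi_{\#}=\xi\neq\Zero$) and $\|H_{G}\|\leq\|G\|_{L^{\infty}(\M_{m,n})}\leq 1$, this forces $\|H_{G}\|=1$ and $\xi_{\#}\in\cM_{G}$.
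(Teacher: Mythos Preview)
Your proof is correct and follows essentially the same route as the paper: the same computation of $V^{*}\xi_{\#}$ and $G\xi_{\#}$, the same key identity $\bar{w}w^{t}B^{*}=B^{*}-\Xi$ for part~2, and the same norm computation via the unitary $V$ and $W$ for part~4. The only cosmetic differences are that the paper invokes the co-outer fact (its Fact~3) directly to pass from the vector equation to the scalar one, whereas you package this as injectivity of $T_{\bar{w}}$, and for part~3 the paper writes $\eta_{\#}=B^{t}\eta+b_{0}w$ explicitly to read off $\Xi^{t}\eta_{\#}=\eta$, while you obtain the same relation by applying $\Xi^{*}$ to $G\xi_{\#}$.
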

\begin{proof}
Notice that for any $a\in H^{2}$,
\begin{equation}\label{xiThetaEq}
			\Theta^{t}\xi_{\#}=\Theta^{t}(A^{t}\xi+av)=\xi,
\end{equation}
because $A$ is a left-inverse for $\Theta$ and $V$ is unitary-valued.  In particular, if the entries of $\xi$ do not have a 
common inner divisor, then the entries of $\xi^\#$ do not have a common inner divisor either.  This establishes assertion 1.

Although assertion 2 is contained in \cite{PY1} and \cite{PY2}, we provide a proof for future reference. 
Let $\xi_{\#}=A^{t}\xi+a v$. It follows from $(\ref{xiThetaEq})$ that 
\begin{equation}\label{VXiEq}
			V^{*}\xi_{\#}=\left(
										\begin{array}{c}
													v^{*}\xi_{\#}\\
													\Theta^{t}\xi_{\#}
										\end{array}\right)=\left(
										\begin{array}{c}
													v^{*}\xi_{\#}\\
													\xi
										\end{array}\right),
\end{equation}
and so $G\xi_{\#}=\bar{w}uv^{*}\xi_{\#}+\Xi\Psi\xi$.    Since $W$ is unitary-valued, then $I_{m}=\bar{w}w^{t}+\Xi\Xi^{*}$ 
holds and so
\[	B^{*}=I_{m}B^{*}=\bar{w}w^{t}B^{*}+\Xi\Xi^{*}B^{*}=\bar{w}w^{t}B^{*}+\Xi.	\]
In particular, $\Xi=(I_{n}-\bar{w}w^{t})B^{*}$ and so
\begin{equation}\label{GXiEq}
			G\xi_{\#}=\bar{w}u(v^{*}A^{t}\xi+a)+\Xi\Psi\xi=B^{*}\Psi\xi+\bar{w}(u(v^{*}A^{t}\xi+a)-w^{t}B^{*}\Psi\xi).
\end{equation}

In order to obtain the desired conclusion from ($\ref{GXiEq}$), we need the following well-known fact whose proof can 
be found in Chapter 14 of \cite{Pe}.

\begin{fact}\label{coOuterFact}
			Let $\Upsilon$ be a co-outer matrix function in $H^{2}(\M_{m,n})$.  If $\xi\in L^{2}(\C^{n})$ is such that
			$\Upsilon\xi\in H^{2}(\C^{m})$, then $\xi\in H^{2}(\C^{n})$.
\end{fact}

It follows now, from Fact \ref{coOuterFact}, that $G\xi_{\#}$ belongs to $H^{2}_{-}(\C^{m})$ if and only if 
$\pP_{+}(u(v^{*}A^{t}\xi+a)-w^{t}B^{*}\Psi\xi)=\Zero$ because $\Psi\xi\in H^{2}_{-}(\C^{m-1})$ and $w$ is co-outer.  
Thus, $G\xi_{\#}\in H^{2}_{-}(\C^{m})$ if and only if $T_{u}a=\pP_{+}(w^{t}B^{*}\Psi\xi-u v^{*}A^{t}\xi)$.  This 
completes the proof of 2.

Henceforth, we fix a function $a_{0}\in H^{2}$ that satisfies $(\ref{PYequation})$.  The existence of $a_{0}$ follows 
from the fact that $T_{u}$ is surjective.

To prove 3, observe that $(\ref{GXiEq})$ can be rewritten as
\[	G\xi_{\#}=B^{*}\Psi\xi+\bar{w}\bar{z}\bar{b}_{0}	\]
for some $b_{0}\in H^{2}$ because $\pP_{+}(u(v^{*}A^{t}\xi+a_{0})-w^{t}B^{*}\Psi\xi)=\Zero$. Let $\eta\Mydef\bar{z}\overline{\Psi\xi}$.  
Then $\eta_{\#}=\bar{z}\bar{G}\bar{\xi}^{\#}=B^{t}\eta+b_{0}w$ and so \[	\Xi^{t}\eta_{\#}=\Xi^{t}B^{t}\eta+b_{0}\Xi^{t}w=\eta,	\] 
because $B$ is a left-inverse of $\Xi$ and $W$ is unitary-valued.  Hence, $\eta_{\#}$ is co-outer whenever $\eta$ is co-outer.

Finally, we prove 4.  Since $\xi$ is a maximizing vector of $H_{\Psi}$ and belongs to $\ker T_{\Psi}$, then
$\|\Psi\xi\|_{2}=\|H_{\Psi}\xi\|_{2}=\|\xi\|_{2}$, as $\|H_{\Psi}\|=1$.  Moreover, since $H_{G}\xi_{\#}=G\xi_{\#}$, 
$W$ is unitary-valued, and
\begin{equation*}
			W G\xi_{\#}=\left(
										\begin{array}{c}
													u v^{*}\xi_{\#}\\
													\Psi\xi
										\end{array}\right),
\end{equation*}
we may conclude that
\begin{equation*}
			\|H_{G}\xi_{\#}\|_{2}^{2}	=\|W G\xi_{\#}\|_{2}^{2}=\|u v^{*}\xi_{\#}\|_{2}^{2}+\|\Psi\xi\|_{2}^{2}
																=\|v^{*}\xi_{\#}\|_{2}^{2}+\|\xi\|_{2}^{2}=\|\xi_{\#}\|_{2}^{2}
\end{equation*}
because $(\ref{VXiEq})$ holds and $V$ is unitary-valued.  Thus $\xi_{\#}\in\cM_{G}$.
\end{proof}

We are now ready to state and prove the main result of this section.

\begin{thm}\label{MainThm}
			Let $m,n \geq2$ and $G\in L^{\infty}(\M_{m,n})$ be a matrix function of the form
			\begin{equation*}
						G=W_{0}^{*}\left(	\begin{array}{cc}
																		u_{0}	&	\Zero\\
																		\Zero	&	\Psi_{0}
															\end{array}\right) V_{0}^{*},
			\end{equation*}
			where $u_{0}$ is a unimodular function such that the Toeplitz operator $T_{u_{0}}$ is Fredholm with $\ind T_{u_{0}}>0$, 
			$\Psi_{0}\in L^{\infty}(\M_{m-1,n-1})$, the matrix functions $V_{0}=(\,v_{0}\;\bar{\Theta}\,)$ and $W_{0}^{t}=(\,w_{0}\;\bar{\Xi}\,)$ 
			are thematic, and the bounded analytic matrix functions $\Theta$ and $\Xi$ are left-invertible in $H^{\infty}$.  Suppose that
			\begin{equation}\label{PsiFact}
						\Psi_{0}=W_{1}^{*}\left(	\begin{array}{cc}
																						u_{1}	&	\Zero\\
																						\Zero	&	\Psi_{1}
																			\end{array}\right) V_{1}^{*}
			\end{equation}
			for some unimodular function $u_{1}$ such that the Toeplitz operator $T_{u_{1}}$ is Fredholm with $\ind T_{u_{1}}>0$,
			 $\Psi_{1}\in L^{\infty}(\mathbb{M}_{m-2,n-2})$ such that $\|\Psi_{1}\|_{L^{\infty}(\mathbb{M}_{m-2,n-2})}\leq 1$,
			and thematic matrix functions $V_{1}$ and $W_{1}^{t}$.  Then $G$ admits a factorization of the form
			\begin{equation*}
						G=\cW^{*}\left(
							\begin{array}{cc}
										u			&	\Zero\\
										\Zero	&	\Delta
							\end{array}\right)\cV^{*}
			\end{equation*}
			for some unimodular function $u$ such that $T_{u}$ is Fredholm with index equal to $1$, a badly approximable matrix function 
			$\Delta$ such that $\|\Delta\|_{L^{\infty}(\M_{m-1,n-1})}=1$, and thematic matrix functions $\cV$ and $\cW^{t}$.
\end{thm}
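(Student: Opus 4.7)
The plan is to construct a co-outer maximizing vector $\xi_\#$ of $H_G$ whose ``dual'' vector $\eta_\# := \bar z\, \bar G\, \bar\xi_\#$ is also co-outer.  Given such a pair, the standard procedure (Chapter~14 of \cite{Pe}) for building a thematic factorization of a badly approximable matrix function from a maximizing vector yields a factorization of $G$ of the required form; moreover, the co-outerness of both $\xi_\#$ and $\eta_\#$ forces the scalar symbol $u$ in the resulting factorization to take the form $\bar z\, \bar h'/h$ with no extra scalar inner factor, so that $\ind T_u = 1$ as required.

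First I would verify that $G$ and $\Psi_0$ are both badly approximable with unit $L^\infty$-norm and unit Hankel-operator norm; this follows from their factorizations matching the form $(\ref{badG})$, using $\|\Psi_1\|_{L^\infty} \leq 1$.  Next, Corollary~\ref{badFact} applied to $u_1$ furnishes a scalar representation $u_1 = \bar z^{k_1}\bar h_1/h_1$ for some outer $h_1$ with $h_1$ and $1/h_1$ in $H^{p_0}$ for some $p_0 > 2$, where $k_1 := \ind T_{u_1}$.  The key construction is then to define
\[
\xi := q(z)\, h_1\, v_1,
\]
where $v_1$ is the first column of $V_1$ and $q$ is a polynomial of degree exactly $k_1-1$ whose roots all lie on $\T$ --- for concreteness, $q(z) := (z-1)^{k_1-1}$ (take $q \equiv 1$ when $k_1 = 1$).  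A direct computation using $(\ref{PsiFact})$ yields $\Psi_0\,\xi = \bar z^{k_1}\, q\, \bar h_1\, \bar w_1 \in H^2_-(\C^{m-1})$, so $\xi \in \ker T_{\Psi_0}$, and the equality $\|\Psi_0\xi\|_2 = \|\xi\|_2$ places $\xi \in \cM_{\Psi_0}$.

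The two properties required to apply Lemma~\ref{MainLemma} to $\xi$, namely that both $\xi$ and $\bar z\, \overline{\Psi_0\, \xi} = z^{k_1-1}\,\overline{q(z)}\, h_1\, w_1$ be co-outer, both follow from the choice of $q$: the roots of the ``reciprocal'' polynomial $z^{k_1-1}\,\overline{q(z)}$ are the reflections $1/\bar a$ of the roots $a$ of $q$, so placing all roots of $q$ on $\T$ simultaneously makes both polynomials outer.  Lemma~\ref{MainLemma} then produces $\xi_\# := A^t\xi + a\, v_0 \in \ker T_G \cap \cM_G$ which is co-outer, for an appropriate $a \in H^2$ satisfying $(\ref{PYequation})$; and the hypothesis of part~3 of Lemma~\ref{MainLemma} is met, so $\eta_\# := \bar z\, \bar G\, \bar\xi_\#$ is co-outer as well.

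Finally, applying to the co-outer pair $(\xi_\#, \eta_\#)$ the standard construction of a thematic factorization (recalled after $(\ref{badG})$) yields a factorization of $G$ of the form $\cW^* \bigl(\begin{smallmatrix} u & \Zero \\ \Zero & \Delta \end{smallmatrix}\bigr) \cV^*$ with $\cV$ and $\cW^t$ thematic, with $u$ of the form $\bar z\, \bar h'/h$ (so $\ind T_u = 1$), and with $\Delta$ badly approximable and $\|\Delta\|_{L^\infty(\M_{m-1,n-1})} = 1$ --- the latter properties being inherited from $\|G\|_{L^\infty} = \|H_G\| = 1$ and the fact that only the ``top layer'' of norm~$1$ has been peeled off.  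The principal technical obstacle is the choice of $q$: ensuring that the reciprocal polynomial is also outer forces all roots of $q$ onto $\T$, a subtlety that does not arise when $k_1 = 1$ but becomes essential for $k_1 \geq 2$.
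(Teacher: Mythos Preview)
Your construction of a co-outer maximizing vector via $\xi = q(z)h_1v_1$ with $q(z)=(z-1)^{k_1-1}$ is correct up through the point where you obtain $u=\bar z\,\bar h/h$ with $h$ outer.  The gap is the final inference: the representation $u=\bar z\,\bar h/h$ with $h$ outer does \emph{not} by itself force $\ind T_u=1$.  For a counterexample take $h(z)=1+z$; then $\bar h/h=\bar z$ on $\T$, so $u=\bar z\,\bar h/h=\bar z^{2}$ and $\ind T_u=2$.  What is needed (and what the paper supplies) is that $1/h\in H^{2}$, so that Theorem~\ref{invThm} gives $T_{\bar h/h}$ invertible and hence $\ind T_u=1$.

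The paper secures $1/h\in H^{p}$ from the pointwise bound
\[
|h(\zeta)|^{2}=\|\xi_\#(\zeta)\|^{2}=\|V_0^{*}\xi_\#(\zeta)\|^{2}\ge\|\xi(\zeta)\|^{2},
\]
and this is exactly where your choice of $q$ fails: with $q(z)=(z-1)^{k_1-1}$ one gets $\|\xi(\zeta)\|=|\zeta-1|^{k_1-1}|h_1(\zeta)|$, which vanishes at $\zeta=1$, so the bound gives no control on $1/h$ near that point.  The paper instead takes $q(z)=z^{k_1-1}$, so that $|q|\equiv 1$ on $\T$ and $\|\xi\|=|h_1|$, yielding $1/|h|\le 1/|h_1|\in L^{p}$.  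The price is that $\xi$ is no longer co-outer when $k_1\ge 2$; the paper pays it by observing that the common inner divisor of $\xi_\#=A^t\xi+a_0v_0$ must divide $z^{k_1-1}$ (since $\Theta^t\xi_\#=\xi$) and then arranging $z\nmid a_0$ (replacing $a_0$ by $a_0+h_0$ if necessary).  Your own analysis shows that forcing \emph{both} $q$ and $z^{k_1-1}\bar q$ to be outer pins every root of $q$ to $\T$, so there is no polynomial $q$ of degree $k_1-1\ge 1$ that simultaneously makes $\xi$ co-outer and keeps $|q|$ bounded away from zero on $\T$; the trade-off the paper makes is therefore unavoidable.
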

\begin{proof}
			Let $A$ and $B$ be left-inverses of $\Theta$ and $\Xi$ in $H^{\infty}$, respectively, and $k_{j}\Mydef\ind T_{u_{j}}$ for $j=0,1$.  
			By Corollary \ref{badFact}, there is an outer function $h_{j}$ such that
			\[	u_{j}=\bar{z}^{k_{j}}\frac{\bar{h}_{j}}{h_{j}}	\] 
			and both $h_{j}$ and $1/h_{j}$ belong to $H^{p}$ for some $2<p\leq\infty$, for $j=0,1$.  Let $v_{1}$ denote the first column of 
			$V_{1}$ and $\xi\Mydef z^{k_{1}-1} h_{1}v_{1}$.  It follows at once from $(\ref{PsiFact})$ that $\Psi_{0}\xi=\bar{z}\bar{h}_{1}\bar{w}_{1}$.  
			Thus, $\xi$ is a maximizing vector of $H_{\Psi_{0}}$ and belongs to $\ker T_{\Psi_{0}}$.  In particular, the column function
			$\eta\Mydef\bar{z}\bar{\Psi}_{0}\bar{\xi}=h_{1}w_{1}$ is co-outer.
			
			Consider the equation
			\begin{equation}\label{keyEquation}
						T_{u_{0}}a=\pP_{+}(w_{0}^{t}B^{*}\Psi_{0}\xi-u_{0}v_{0}^{*}A^{t}\xi),\,a\in H^{2}.
			\end{equation}
			It follows from the surjectivity of the Toeplitz operator $T_{u_{0}}$ that there is an $a_{0}\in H^{2}$ that satisfies 
			$(\ref{keyEquation})$.  Furthermore, we may assume without loss of generality that $z$ is not an inner divisor $a_{0}$;
			otherwise, we consider $a_{0}+h_{0}$ instead of $a_{0}$.
			
			By Lemma \ref{MainLemma}, the column function 
			\[	\xi_{\#}\Mydef A^{t}\xi+a_{0}v_{0}	\] 
			is a maximizing vector of the Hankel operator $H_{G}$ and belongs to $\ker T_{G}$, as $\xi$ is a maximizing vector of the 
			Hankel operator $H_{\Psi_{0}}$ and $\|H_{\Psi_{0}}\|=1$.  Since $\Theta^{t}\xi_{\#}=\xi$ and $h_{1}v_{1}$ is co-outer, then 
			the greatest common inner divisor of the entries of $\xi_{\#}$ must be an inner divisor of $z^{k_{1}-1}$ by Fact \ref{coOuterFact}.  
			Therefore, $\xi_{\#}$ is co-outer whenever $z$ is not an inner divisor of $\xi_{\#}$.  On the other hand, $z$ is an inner divisor 
			of the entries of $\xi_{\#}$	if and only if $z$ is an inner divisor of $a_{0}$.  Since $z$ is not an inner divisor of $a_{0}$, 
			it follows that $\xi_{\#}$ is co-outer.
			
			From $(\ref{GXiEq})$ and $(\ref{keyEquation})$,
			\[	G\xi_{\#}=B^{*}\Psi_{0}\xi+\bar{w}_{0}\bar{z}\bar{b}_{0},	\]
			for some $b_{0}\in H^{2}$.  Thus the function
			\[	\eta_{\#}\Mydef\bar{z}\bar{G}\bar{\xi}_{\#}=B^{t}\eta+b_{0}w_{0}	\]  
			is co-outer as  well, by Lemma \ref{MainLemma}.
						
			From the remarks following Definition \ref{mDef}, we deduce that
			\[	\|\eta_{\#}(\zeta)\|_{\C^{m}}	=\|G\xi_{\#}(\zeta)\|_{\C^{m}}=\|G(\zeta)\|_{\M_{m,n}}\|\xi_{\#}(\zeta)\|_{\C^{n}}
																				=\|\xi_{\#}(\zeta)\|_{\C^{n}}	\]
			for a.e. $\zeta\in\T$ because $\xi_{\#}$ is a maximizing vector of the Hankel operator $H_{G}$ and belongs to $\ker T_{G}$.
			Let $h\in H^{2}$ be an outer function such that $|h(\zeta)|=\|\xi_{\#}(\zeta)\|_{\C^{n}}$ for a.e. $\zeta\in\T$.
			We obtain that
			\begin{equation}\label{etaEq}
						\|\eta_{\#}(\zeta)\|_{\C^{m}}	=\|\xi_{\#}(\zeta)\|_{\C^{n}}=|h(\zeta)|
			\end{equation}
			for a.e. $\zeta\in\T$ and so the column functions
			\[	\nu\Mydef \frac{1}{h}\xi_{\#}\, \mbox{ and }\;\omega\Mydef \frac{1}{h}\eta_{\#}	\] 
			are both inner and co-outer.
			
			Consider the unimodular function $u\Mydef\omega^{t}G\nu$.  It is easy to verify that
			\begin{equation}\label{factU}
						u=\frac{1}{h^{2}}(h\omega)^{t}G(h\nu)=\frac{1}{h^{2}}\bar{z}|h|^{2}=\bar{z}\frac{\bar{h}}{h},
			\end{equation}
			by $(\ref{etaEq})$, and
			\[	\|H_{u}\|_{\rm e}=\dist_{L^{\infty}}(u, H^{\infty}+C)\leq\dist_{L^{\infty}(\M_{m,n})} (G, (H^{\infty}+C)(\M_{m,n}))<1,	\] 
			because $\nu$ and $\omega$ are inner and $\|H_{G}\|_{\rm e}<1$.  Since $u$ satisfies $(\ref{factU})$ and $\|H_{u}\|_{\rm e}<1$, 
			it follows that $u$ is an admissible badly approximable scalar function, and so the Toeplitz operator $T_{u}$ is Fredholm with 
			positive index (see Section \ref{intro}) and therefore $T_{\bar{h}/h}$ is Fredholm.  Since $V_{0}$ is unitary-valued and 
			\begin{equation*}
						V_{0}^{*}\xi_{\#}=\left(
						\begin{array}{c}
									v^{*}_{0}\xi_{\#}\\
									\xi
						\end{array}\right),
			\end{equation*}
			then
			\begin{equation*}
						|h(\zeta)|^{2}=\|\xi_{\#}(\zeta)\|_{\C^{n}}^{2}=\|V_{0}^{*}\xi_{\#}(\zeta)\|_{\C^{n}}^{2}
														=|(v^{*}_{0}\xi_{\#})(\zeta)|^{2}+\|\xi(\zeta)\|_{\C^{n-1}}^{2}\geq|h_{1}(\zeta)|^{2}
			\end{equation*}
			holds for a.e. $\zeta\in\T$ and so $1/h\in H^{p}$.  By Theorem \ref{invThm}, $T_{\bar{h}/h}$ is invertible and so $\ind T_{u}=1$.
			
			Let $\cV$ and $\cW^{t}$ be thematic matrix functions whose first columns are $\nu$ and $\omega$, respectively.  (The existence
			of such matrix functions was mentioned in Section \ref{thematicSection}.) 	Since $\omega^{t}G\nu=u$ is unimodular, it follows that
			\begin{equation*}
						\cW G\cV=\left(
							\begin{array}{cc}
										u			&	\Zero\\
										\Zero	&	\Delta
							\end{array}\right)
			\end{equation*}
			for some bounded matrix function $\Delta\in L^{\infty}(\M_{m-1,n-1})$ with $L^{\infty}$-norm equal to 1, which is necessarily
			badly approximable.  This completes the proof.
\end{proof}

\begin{cor}\label{MainCor}
			Suppose that $G$ satisfies the hypothesis of Theorem {\em \ref{MainThm}}.  If $k$ is an integer satisfying $1\leq k\leq\ind T_{u_{1}}$, 
			then $G$ admits a factorization of the form
			\begin{equation*}
						G=\cW^{*}\left(
							\begin{array}{cc}
										u			&	\Zero\\
										\Zero	&	\Delta
							\end{array}\right)\cV^{*}
			\end{equation*}
			for some unimodular function $u$ such that $T_{u}$ is Fredholm with index equal to $k$, a badly approximable matrix function 
			$\Delta$ such that $\|\Delta\|_{L^{\infty}(\M_{m-1,n-1})}=1$, and thematic matrix functions $\cV$ and $\cW^{t}$.
\end{cor}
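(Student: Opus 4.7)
The argument follows the same template as the proof of Theorem \ref{MainThm}, with the maximizing vector modified so as to carry an additional inner factor that raises the Fredholm index of the new scalar symbol from $1$ to $k$. Setting $k_j\Mydef\ind T_{u_j}$, use Corollary \ref{badFact} to write $u_j=\bar{z}^{k_j}\bar{h}_j/h_j$ with $h_j,1/h_j\in H^p$ for some $p>2$. In place of the vector $\xi=z^{k_1-1}h_1 v_1$ chosen in Theorem \ref{MainThm}, I take
\[
\xi\Mydef z^{k_1-k}h_1 v_1,
\]
where $v_1$ is the first column of $V_1$. Direct computation gives $\Psi_0\xi=\bar{z}^k\bar{h}_1\bar{w}_1\in H^2_-(\C^{m-1})$ (using $k\geq1$) and $\|\Psi_0\xi\|_2=\|h_1\|_2=\|\xi\|_2$, so $\xi$ is a maximizing vector of $H_{\Psi_0}$ belonging to $\ker T_{\Psi_0}$.

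Let $A$ and $B$ be left inverses in $H^\infty$ of $\Theta$ and $\Xi$, respectively, as in Theorem \ref{MainThm}. Solve $(\ref{PYequation})$ for some $a_0\in H^2$ and set $\xi_\#\Mydef A^t\xi+a_0 v_0$. Lemma \ref{MainLemma} yields $\xi_\#\in\cM_G\cap\ker T_G$. Now $\eta\Mydef\bar{z}\overline{\Psi_0\xi}=z^{k-1}h_1 w_1$ carries the inner factor $z^{k-1}$, and consequently $\eta_\#\Mydef\bar{z}\overline{G\xi_\#}=B^t\eta+b_0 w_0$ (with $b_0\in H^2$ determined by $a_0$) has common inner factor dividing $z^{k-1}$.

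The core technical step is to arrange $a_0$ within its coset $a_0+\ker T_{u_0}$ so that $\xi_\#$ is co-outer while $\eta_\#$ has common inner factor exactly $z^{k-1}$. The first condition is handled as in Theorem \ref{MainThm}, using $h_0\in\ker T_{u_0}$ to ensure $z\nmid a_0$. For the second, note that $\eta_\#=z^{k-1}B^t h_1 w_1+b_0 w_0$; since $w_0$ is co-outer, the divisibility $z^{k-1}\mid\eta_\#$ amounts (generically) to $z^{k-1}\mid b_0$. The shifts $a_0\mapsto a_0+f$ with $f\in\ker T_{u_0}$ induce, via $(\ref{PYequation})$, corresponding shifts of $b_0$ by elements of a $k_0$-dimensional subspace related to $\ker T_{u_0}$ through polynomial reciprocation; balancing the two sets of constraints via this duality is where I expect the main technical difficulty to lie.

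With $\xi_\#$ and $\eta_\#$ as above, let $h$ be the outer scalar function with $|h|^2=\|\xi_\#\|^2$, and define $\nu\Mydef\xi_\#/h$ and $\omega\Mydef\eta_\#/(z^{k-1}h)$. Both are inner and co-outer. A calculation parallel to $(\ref{factU})$ yields
\[
u\Mydef\omega^t G\nu=\frac{\eta_\#^t G\xi_\#}{z^{k-1}h^2}=\frac{\bar{z}|h|^2}{z^{k-1}h^2}=\bar{z}^k\frac{\bar{h}}{h},
\]
and $\|H_u\|_{\rm e}\leq\|H_G\|_{\rm e}<1$ since $\nu$ and $\omega$ are inner. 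Theorem \ref{invThm} combined with the representation $u=\bar{z}^k\bar{h}/h$ gives $T_u$ Fredholm with $\ind T_u=k$. Completing $\nu$ and $\omega$ to thematic matrix functions $\cV$ and $\cW^t$ produces the factorization $G=\cW^*\left(\begin{array}{cc} u & \Zero \\ \Zero & \Delta\end{array}\right)\cV^*$ with $\Delta$ badly approximable and $\|\Delta\|_{L^\infty(\M_{m-1,n-1})}=1$, exactly as at the end of the proof of Theorem \ref{MainThm}.
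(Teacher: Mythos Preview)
Your argument has a genuine gap exactly where you flag it: you never establish that $a_0$ can be chosen within its coset so that $\xi_\#$ is co-outer \emph{and} $\eta_\#$ has common inner factor exactly $z^{k-1}$. These are two separate divisibility conditions --- one on $a_0$ (namely $z\nmid a_0$) and one on $b_0$ (namely $z^{k-1}\mid b_0$ but $z^k\nmid\eta_\#$) --- coupled only through the conjugate-linear relation $\bar z\,\bar b_0=u_0(v_0^*A^t\xi+a_0)-w_0^tB^*\Psi_0\xi$. Saying that this is ``where I expect the main technical difficulty to lie'' is an acknowledgment that the proof is incomplete, not a proof.

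The paper sidesteps the difficulty entirely with a one-line reduction. Apply Theorem~\ref{MainThm} not to $G$ but to $z^{k-1}G$: the factorization of $G$ induces one for $z^{k-1}G$ with scalar symbols $z^{k-1}u_0$ and $z^{k-1}u_1$, and since $1\le k\le\ind T_{u_1}$ the latter still has positive Toeplitz index. Theorem~\ref{MainThm} then gives
\[
z^{k-1}G=\cW^*\left(\begin{array}{cc} u & \Zero \\ \Zero & \Delta \end{array}\right)\cV^*,\qquad \ind T_u=1,
\]
and multiplying through by $\bar z^{k-1}$ yields
\[
G=\cW^*\left(\begin{array}{cc} \bar z^{k-1}u & \Zero \\ \Zero & \bar z^{k-1}\Delta \end{array}\right)\cV^*,
\]
the desired factorization with $\ind T_{\bar z^{k-1}u}=k$. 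The point is that passing to $z^{k-1}G$ absorbs precisely the inner factor $z^{k-1}$ that was causing you trouble: when Theorem~\ref{MainThm} is run on $z^{k-1}G$, the $\xi$ it selects is your $\xi=z^{k_1-k}h_1v_1$, but the resulting $\eta$ is $h_1w_1$ --- co-outer from the outset --- so Lemma~\ref{MainLemma}(3) applies directly and no balancing of constraints on $a_0$ versus $b_0$ is ever needed.
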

\begin{proof}
			Let $k$ be a fixed positive integer satisfying $k\leq\ind T_{u_{1}}$.  By Theorem \ref{MainThm}, the matrix function $z^{k-1}G$ admits
			a factorization of the form
			\begin{equation*}
						z^{k-1}G=\cW^{*}\left(
											\begin{array}{cc}
														u			&	\Zero\\
														\Zero	&	\Delta
											\end{array}\right)\cV^{*},
			\end{equation*}
			where $\ind T_{u}=1$, and so
			\begin{equation*}
						G=\cW^{*}\left(
										\begin{array}{cc}
													\bar{z}^{k-1}u	&	\Zero\\
													\Zero						&	\bar{z}^{k-1}\Delta
										\end{array}\right)\cV^{*}
			\end{equation*}
			is the desired factorization.
\end{proof}

At this point, we are unsatisfied with the conclusion of Corollary \ref{MainCor}.  After all, it does not give any information
concerning the matrix function $\Delta$.  Therefore, we ask, under some reasonable assumptions, whether the ``largest'' possible 
thematic index appearing as the first thematic index in a thematic factorization of $\Delta$ should equal $\ind T_{u_{0}}$. 
An affirmative answer is given in Theorem \ref{CompareIotas}.  Prior to stating and proving Theorem \ref{CompareIotas}, we  
introduce notation and recall some needed facts.
 
Suppose that $G$ is a badly approximable matrix function in $L^{\infty}(\M_{m,n})$ such that $\|H_{G}\|_{\rm e}<1$ and $\|H_{G}\|=1$.  
As mentioned in the remarks following Definition \ref{mDef}, $G$ admits a representation of the form $(\ref{badG})$ for some unimodular 
function $u$ such that the Toeplitz operator $T_{u}$ is Fredholm with $\ind T_{u}>0$.  It turns out that there is an upper bound on 
the possible values of the index of $T_{u}$ given by
\begin{equation}\label{iotaDef}
			\iota(H_{G})\Mydef\min\{	j>0: \|H_{z^{j}G}\|<\|H_{G}\|\}.
\end{equation}
Note that $\iota(H_{G})$ is a well-defined non-zero positive integer and depends only on the Hankel operator $H_{G}$ (and not on the 
choice of its symbol).  Moreover, there exists a (possibly distinct) factorization of $G$ of the form $(\ref{badG})$  such that 
$\ind T_{u}=\iota(H_{G})$ and $\iota(H_{\Psi})\leq\iota(H_{G})$.  See \cite{AP2} or Section 10 in Chapter 14 of \cite{Pe} 
for proofs of these facts.

\begin{thm}\label{CompareIotas}
			Let $m,n\geq 2$.  Suppose that $G\in L^{\infty}(\M_{m,n})$, $\|H_{G}\|_{\rm e}<1$, and $G$ admits the factorizations
			\begin{equation}\label{twoFact}
						G=W^{*}_{0}\left(
							\begin{array}{cc}
										u_{0}	&	\Zero\\
										\Zero	&	\Psi
							\end{array}\right)V^{*}_{0}
						=\cW^{*}\left(
							\begin{array}{cc}
										u			&	\Zero\\
										\Zero	&	\Delta
							\end{array}\right)\cV^{*},
			\end{equation}
			where $u_{0}$ and $u$ are unimodular functions such that the Toeplitz operators $T_{u_{0}}$ and $T_{u}$ are Fredholm with
			positive indices, $\Psi$ and $\Delta$ are badly approximable functions with $L^{\infty}$-norm equal to 1, and the matrix 
			function $V_{0}$, $W_{0}^{t}$, $\cV$, and $\cW^{t}$ are thematic.  If $\ind T_{u_{0}}=\iota(H_{G})$, 
			$\iota(H_{\Psi})<\iota(H_{G})$ and $\ind T_{u}\leq\iota(H_{\Psi})$, then 
			\begin{equation}\label{iotaIneq}
						\iota(H_{\Delta})\geq\iota(H_{G}).
			\end{equation}
			In addition, if $\ind T_{u}=\iota(H_{\Psi})$, then equality holds in $(\ref{iotaIneq})$.
\end{thm}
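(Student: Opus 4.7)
The plan is to establish $\iota(H_{\Delta})\geq\iota(H_{G})$ by producing a nonzero maximizing vector of $H_{z^\ell\Delta}$ where $\ell\Mydef\iota(H_{G})-1$; monotonicity of $j\mapsto\|H_{z^{j}\Delta}\|$ then delivers the inequality for every $j<\iota(H_{G})$. Expand the thematic factors as $V_{0}=(\,v_{0}\;\bar\Theta_{0}\,)$, $W_{0}^{t}=(\,w_{0}\;\bar\Xi_{0}\,)$, $\cV=(\,\nu\;\bar\Theta\,)$, $\cW^{t}=(\,\omega\;\bar\Xi\,)$. Since $\ind T_{z^\ell u_{0}}=\iota(H_{G})-\ell=1$, the kernel of $T_{z^\ell u_{0}}$ is one-dimensional; pick a nonzero $\alpha\in\ker T_{z^\ell u_{0}}$ and set $\xi\Mydef v_{0}\alpha$. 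Using the first-column identity $Gv_{0}=u_{0}\bar w_{0}$ (the first column of $GV_{0}=W_{0}^{*}\operatorname{diag}(u_{0},\Psi)$), one verifies $z^\ell G\xi=z^\ell u_{0}\alpha\,\bar w_{0}\in H^{2}_{-}(\C^{m})$ and $\|z^\ell G\xi\|_{2}=\|\alpha\|_{2}=\|\xi\|_{2}$, so $\xi\in\cM_{z^\ell G}$.

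To transfer $\xi$ to a candidate maximizing vector for $H_{z^\ell\Delta}$, observe that $\cW\cW^{*}=I$ yields the block relation $\omega^{t}\Xi=\Zero$, which upon conjugation gives $\Xi^{*}\bar\omega=\Zero$. Applying $\Xi^{*}$ to
\[
z^\ell G\xi=\bar\omega\bigl(z^\ell u\,\nu^{*}\xi\bigr)+\Xi\bigl(z^\ell\Delta\,\Theta^{t}\xi\bigr),
\]
the first term is annihilated, leaving $\Xi^{*}(z^\ell G\xi)=z^\ell\Delta\,\Theta^{t}\xi$. Since $\Xi^{*}$ has entries in $\overline{H^{\infty}}$ and $z^\ell G\xi\in H^{2}_{-}(\C^{m})$, the right-hand side lies in $H^{2}_{-}(\C^{m-1})$. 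A direct norm computation using the unitarity of $\cV$ and $\cW$ together with $\|z^\ell G\xi\|_{2}=\|\xi\|_{2}$ gives $\|z^\ell\Delta\,\Theta^{t}\xi\|_{2}=\|\Theta^{t}\xi\|_{2}$. Consequently, provided $\Theta^{t}\xi\neq\Zero$, the vector $\Theta^{t}\xi$ is a nonzero maximizing vector of $H_{z^\ell\Delta}$ and $\|H_{z^\ell\Delta}\|=1=\|H_{\Delta}\|$.

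The main obstacle is to show $\Theta^{t}\xi\neq\Zero$. Applying the analogous $\Xi_{0}^{*}\bar w_{0}=\Zero$ identity to the first factorization gives, for every $\zeta\in\cM_{z^\ell G}$, that $z^\ell\Psi\,\Theta_{0}^{t}\zeta\in H^{2}_{-}(\C^{m-1})$ and $\|z^\ell\Psi\,\Theta_{0}^{t}\zeta\|_{2}=\|\Theta_{0}^{t}\zeta\|_{2}$. Because $\iota(H_{\Psi})\leq\ell$ forces $\|H_{z^\ell\Psi}\|<1$, this forces $\Theta_{0}^{t}\zeta=\Zero$, and Fact \ref{coOuterFact} applied to the co-outer column $v_{0}$ yields $\cM_{z^\ell G}\subset v_{0}H^{2}$. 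Suppose now, for contradiction, that $\Theta^{t}\xi=\Zero$; then by the same reasoning $\xi\in\nu H^{2}$, so $v_{0}H^{2}\cap\nu H^{2}$ is nonzero. Writing $\xi=v_{0}\alpha=\nu\beta$ with $\alpha,\beta\in H^{2}$, the equality $|\alpha|=|\beta|$ a.e. on $\T$ together with matching of inner-outer factorizations (using co-outerness of $v_{0}$ and $\nu$) forces $v_{0}=d_{1}\nu$ for a unimodular constant $d_{1}$. The identities $Gv_{0}=u_{0}\bar w_{0}$ and $G\nu=u\bar\omega$ then yield $u_{0}\bar w_{0}=d_{1}u\bar\omega$ componentwise; applying the same matching to $w_{0}$ and $\omega$ gives $\omega=\overline{d_{2}}\,w_{0}$ for a unimodular constant $d_{2}$, whence $u_{0}=d_{1}d_{2}\,u$. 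Substituting the canonical forms $u_{0}=\bar z^{\ind T_{u_{0}}}\bar h_{0}/h_{0}$ and $u=\bar z^{\ind T_{u}}\bar h/h$ from Corollary \ref{badFact} and separating analytic from anti-analytic parts forces $\ind T_{u_{0}}=\ind T_{u}$, contradicting $\ind T_{u}\leq\iota(H_{\Psi})<\iota(H_{G})=\ind T_{u_{0}}$. Hence $\Theta^{t}\xi\neq\Zero$ and $\iota(H_{\Delta})\geq\iota(H_{G})$.

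For the equality clause when $\ind T_{u}=\iota(H_{\Psi})$, the reverse inequality $\iota(H_{\Delta})\leq\iota(H_{G})$ should follow by a symmetric lifting argument: given a would-be maximizing vector $g\in\cM_{z^{\iota(H_{G})}\Delta}$, apply a Lemma \ref{MainLemma}-type construction to the second factorization to lift $g$ to an element of $\cM_{z^{\iota(H_{G})}G}=\{\Zero\}$, producing a contradiction and hence $\|H_{z^{\iota(H_{G})}\Delta}\|<\|H_{\Delta}\|$. The hypothesis $\ind T_{u}=\iota(H_{\Psi})$ is precisely what ensures the relevant Toeplitz equation remains solvable despite the shift; the main technical difficulty in this direction is handling the equation $T_{z^{\iota(H_{G})}u}a=\cdots$, whose index drops to $\ind T_{u}-\iota(H_{G})<0$.
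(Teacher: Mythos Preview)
Your argument for the inequality $\iota(H_\Delta)\geq\iota(H_G)$ is correct but takes a different, more constructive route than the paper. The paper argues by a quick contradiction: if $\iota(H_\Delta)<\iota$ then $\|H_{z^{\iota-1}\Delta}\|<1$ and $\ind T_{z^{\iota-1}u}\leq\iota(H_\Psi)-(\iota-1)\leq 0$, so a cited lemma (Lemma 14.10.7 in \cite{Pe}) forces $\|H_{z^{\iota-1}G}\|<1$, contradicting the definition of $\iota(H_G)$. You instead build an explicit maximizing vector of $H_{z^\ell\Delta}$ from one of $H_{z^\ell G}$. One small point: your step ``applying the same matching to $w_0$ and $\omega$'' is not literally the same argument, since here you only have $w_0=c\omega$ with $c$ unimodular in $L^\infty$, not an equation $w_0\alpha'=\omega\beta'$ with analytic $\alpha',\beta'$. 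The fix is to invoke Fact~\ref{coOuterFact}: since $\omega$ is co-outer and $\omega c=w_0\in H^2(\C^m)$, one gets $c\in H^2$, hence $c$ is inner, hence constant by co-outerness of $w_0$.

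The equality clause, however, has a genuine gap. You propose to lift a hypothetical $g\in\cM_{z^{\iota(H_G)}\Delta}$ to $\cM_{z^{\iota(H_G)}G}$ via a Lemma~\ref{MainLemma}-type construction, but this requires solving $T_{z^{\iota(H_G)}u}a=\cdots$, and as you yourself note, $\ind T_{z^{\iota(H_G)}u}=\ind T_u-\iota(H_G)<0$, so this Toeplitz operator is \emph{not} surjective and the equation need not be solvable. Your assertion that ``the hypothesis $\ind T_u=\iota(H_\Psi)$ is precisely what ensures the relevant Toeplitz equation remains solvable'' is unjustified; you have not explained how to overcome the negative index, and no such mechanism is apparent. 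The paper avoids this obstacle entirely by shifting instead by $j=\iota(H_\Psi)=\ind T_u$, so that $\ind T_{z^ju}=0$. Then Lemma~\ref{MaxVectors} (proved in the paper for exactly this purpose) gives $\dim\cM_{z^jG}=\dim\cM_{z^j\Delta}$. Since $\|H_{z^j\Psi}\|<1$, the first factorization shows that the sum of thematic indices of $z^jG$ for the superoptimal singular value $1$ equals $\iota(H_G)-j$; combining the known identity of this sum with $\dim\cM_{z^jG}$ and the bound $\dim\cM_{z^j\Delta}\geq\iota(H_{z^j\Delta})\geq\iota(H_\Delta)-j$ yields $\iota(H_G)\geq\iota(H_\Delta)$.
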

\begin{proof}
			Let $\iota\Mydef\iota(H_{G})$.  If $\iota(H_{\Delta})<\iota$, then
			\[	\|H_{z^{\iota-1}\Delta}\|<\|H_{\Delta}\|=1\,\mbox{ and }\;\ind T_{z^{\iota-1}u}\leq\iota(H_{\Psi})-(\iota-1)\leq 0.	\]
			It follows that the matrix function
			\begin{equation*}
						z^{\iota-1}G=\cW^{*}\left(
												\begin{array}{cc}
															z^{\iota-1}u	&	\Zero\\
															\Zero					&	z^{\iota-1}\Delta
												\end{array}\right)\cV^{*}
			\end{equation*}
			satisfies $\|H_{z^{\iota-1}G}\|<1=\|H_{G}\|$, by Lemma 14.10.7 in \cite{Pe}, and so $\iota(H_{G})\leq\iota-1$, a contradiction.
			This establishes $(\ref{iotaIneq})$.
			
			Suppose that $\ind T_{u}=\iota(H_{\Psi})$.  Let $j\Mydef\iota(H_{\Psi})$ and consider the factorizations
			\begin{equation*}
						z^{j}G=W_{0}^{*}\left(
							\begin{array}{cc}
										z^{j}u_{0}	&	\Zero\\
										\Zero				&	z^{j}\Psi
							\end{array}\right)V^{*}_{0}
						=\cW^{*}\left(
							\begin{array}{cc}
										z^{j}u	&	\Zero\\
										\Zero		&	z^{j}\Delta
							\end{array}\right)\cV^{*}.
			\end{equation*}	
			It is easy to see that the sum of the thematic indices of $z^{j}G$ corresponding to the superoptimal singular value 1 
			equals $\ind T_{z^{j}u_{0}}=\iota(H_{G})-\iota(H_{\Psi})$, because $\|H_{z^{j}\Psi}\|<\|H_{\Psi}\|=1$.  
			
			In order to proceed, we need the following lemma.
			
			\begin{lem}\label{MaxVectors}
						Let $G\in L^{\infty}(\M_{m,n})$ be such that $\|H_{G}\|_{\rm e}<1$ and $\|H_{G}\|=1$.  Suppose that $G$ is a badly 
						approximable matrix function that admits a representation of the form $(\ref{badG})$, in which $V$ and $W^{t}$ are 
						thematic matrix functions, $u$ is a unimodular function, and $\Psi$ is a bounded matrix function.  
						Let $V=(\,v\;\bar{\Theta}\,)$.
						\begin{enumerate}
									\item	If $f\in\cM_{G}$ satisfies $\Theta^{t} f=\Zero$, then $f=\xi v$ for some 
												$\xi\in\ker T_{u}$.
									\item	If $\Psi$ is a badly approximable matrix function with $L^{\infty}$-norm equal to 1 and the Toeplitz 
												operator $T_{u}$ is Fredholm with $\ind T_{u}\leq 0$, then 
												\begin{equation}\label{compareMaxVectors}
															\dim\cM_{G}\leq\dim\cM_{\Psi}.
												\end{equation}
												Moreover, if $\ind T_{u}=0$, then equality holds in $(\ref{compareMaxVectors})$.
						\end{enumerate}
			\end{lem}

			We finish the proof of Theorem \ref{CompareIotas} before proving Lemma \ref{MaxVectors}.
			
			As already seen,
			\[	\iota(H_{\Psi})<\iota(H_{G})\leq\iota(H_{\Delta})	\]
			and so $z^{j}\Delta$ is a badly approximable matrix function of $L^{\infty}$-norm equal to 1, since $\|H_{z^{j}\Delta}\|=
			\|H_{\Delta}\|=1$.  Let $\ell\Mydef\iota(H_{z^{j}\Delta})$.  Then $\|H_{z^{\ell}z^{j}\Delta}\|<\|H_{z^{j}\Delta}\|=1$ 
			implies that $\ell+j\geq\iota(H_{\Delta})$, and therefore
			\[	\dim\cM_{z^{j}G}=\dim\cM_{z^{j}\Delta}\geq\iota(H_{z^{j}\Delta})\geq\iota(H_{\Delta})-j=\iota(H_{\Delta})-\iota(H_{\Psi}),	\]
			by Lemma \ref{MaxVectors}.  Hence
			\[	\iota(H_{G})\geq\iota(H_{\Delta}),	\]
			because the sum of the thematic indices of $z^{j}G$ corresponding to the superoptimal singular value 1, namely 
			$\iota(H_{G})-\iota(H_{\Psi})$, equals $\dim\cM_{z^{j}G}$ (e.g. see Theorem 14.7.4 of \cite{Pe}).  This completes the proof.
\end{proof}

\begin{rmk}
			Note that if the inequality $\ind T_{u}\leq\iota(H_{\Psi})$ in Theorem \ref{CompareIotas} is strict, then equality in $(\ref{iotaIneq})$
			may not hold.  For instance, consider a monotone non-increasing thematic factorization (e.g. see Section \ref{SectionMonoFactorization}) 
			of any admissible unitary-valued very badly approximable matrix function $G\in L^{\infty}(\M_{2})$ with thematic indices $3$ and $2$,
			and any other thematic factorization of $G$ whose first thematic index equals 1.
\end{rmk}

\begin{proof}[Proof of Lemma \ref{MaxVectors}]	
Let $W^{t}=(\,w\;\bar{\Xi}\,)$.  To prove assertion 1, we may assume that $f\in\cM_{G}$ is non-zero.  Since
\begin{equation*}
			f=VV^{*}f=(\,v\;\bar{\Theta}\,)\left(	\begin{array}{c}
																									v^{*}f\\
																									\Zero
																						\end{array}\right)=v(v^{*}f),
\end{equation*}
Fact \ref{coOuterFact} implies $\xi\Mydef v^{*}f\in H^{2}$, as $v$ is co-outer.  It remains to show that $u\xi\in H^{2}_{-}$. 
Since  $u\xi\bar{w}=Gf\in H^{2}_{-}(\C^{m})$, it follows again from Fact \ref{coOuterFact} that $u\xi\in H^{2}_{-}$ because 
$w$ is co-outer.  Thus, $f=\xi v$ with $\xi\in\ker T_{u}$.

Suppose now that the functions $\Psi$ and $u$ satisfy the assumptions of assertion 2.  Let $\{f_{j}\}_{j=1}^{N}$ be a 
basis for $\cM_{G}$ and define $g_{j}=\Theta^{t}f_{j}$ for $1\leq j\leq N$.  Since $\ind T_{u}\leq 0$, then $\ker T_{u}$ is 
trivial, and each $g_{j}$ is a non-zero function in $H^{2}(\C^{n-1})$ by assertion 1.  Furthermore, $\{g_{j}\}_{j=1}^{N}$ 
is a linearly independent set in $H^{2}(\C^{n-1})$; after all, if there are scalars $c_{1},\ldots,c_{n}$ such that
\[	\Zero=\sum_{j=1}^{N}c_{j}g_{j}=\Theta^{t}\left(\sum_{j=1}^{N}c_{j}f_{j}\right),	\]
then $\sum_{j=1}^{N}c_{j}f_{j}=\Zero$ by assertion 1, and so $c_{j}=0$ for $1\leq j\leq N$ because 
$\{f_{j}\}_{j=1}^{N}$ is a linearly independent set.  In order to prove $(\ref{compareMaxVectors})$, it suffices 
to show that $g_{j}$ belongs to $\cM_{\Psi}$ for $1\leq j\leq N$.  To this end, fix $j_{0}$ such that $1\leq j_{0}\leq N$. 
Since $G$ is badly approximable and admits a factorization of the form $(\ref{badG})$, then 
\[	\|f_{j_{0}}(\zeta)\|^{2}_{\C^{n}}=\|Gf_{j_{0}}(\zeta)\|^{2}_{\C^{m}}=|v^{*}f_{j_{0}}(\zeta)|^{2}
																			+\|\Psi\Theta^{t}f_{j_{0}}(\zeta)\|^{2}_{\C^{m-1}}	\]
for a.e. $\zeta\in\T$.  On the other hand,
\[	|v^{*}f_{j_{0}}(\zeta)|^{2}+\|\Theta^{t}f_{j_{0}}(\zeta)\|^{2}_{\C^{n-1}}=\|V^{*}f_{j_{0}}(\zeta)\|^{2}_{\C^{n}}
																																							=\|f_{j_{0}}(\zeta)\|^{2}_{\C^{n}}	\]
holds for a.e. $\zeta\in\T$ because $V$ is unitary-valued.  Thus, the function $g_{j_{0}}=\Theta^{t}f_{j_{0}}$ satisfies
\[	\|\Psi g_{j_{0}}(\zeta)\|_{\C^{m-1}}=\|g_{j_{0}}(\zeta)\|_{\C^{n-1}} \mbox{ for a.e. }\zeta\in\T.	\]
Since $W$ is unitary-valued,
\begin{equation*}
			W Gf_{j_{0}}=\left(	\begin{array}{c}
												uv^{*}f_{j_{0}}\\
												\Psi\Theta^{t}f_{j_{0}}
									\end{array}\right),
\end{equation*}
and so $\Psi\Theta^{t} f_{j_{0}}=\Xi^{*}Gf_{j_{0}}\in H^{2}_{-}(\C^{m-1})$ by Fact \ref{coOuterFact} because 
$Gf_{j_{0}}\in H^{2}_{-}(\C^{m})$.  Hence, we may conclude that $g_{j_{0}}\in H^{2}(\C^{m-1})$ satisfies
\[	\|H_{\Psi}g_{j_{0}}\|_{2}=\|\Psi g_{j_{0}}\|_{2}=\|g_{j_{0}}\|_{2},	\]
i.e. each $g_{j_{0}}$ is a maximizing vector of the Hankel operator $H_{\Psi}$.  This completes the proof of 
$(\ref{compareMaxVectors})$.

Suppose now that $\ind T_{u}=0$.  Let $\xi_{1},\ldots,\xi_{d}$ be a basis for $\cM_{\Psi}$.  By Lemma \ref{MainLemma}, 
each function $\xi_{j}$ induces a function $\xi_{\#}^{(j)}=A^{t}\xi_{j}+a_{j}v\in\cM_{G}$ for some suitable $a_{j}\in H^{2}$,
$1\leq j\leq d$, where $A$ denotes a left-inverse of $\Theta$ in $H^{\infty}$.  It is easy to see from $(\ref{VXiEq})$ that 
$\xi_{\#}^{(1)},\ldots, \xi_{\#}^{(d)}$ form a linearly independent set, as $\{\xi_{j}\}_{j=1}^{d}$ is a linearly 
independent set.  Hence $\dim\cM_{G}=\dim\cM_{\Psi}$. 
\end{proof}

\begin{rmk}\label{exMaxVectors}
			Notice that the inequality given in \ref{compareMaxVectors} of Lemma \ref{MaxVectors} may in fact be strict.
			For instance, consider the badly approximable matrix function
			\begin{displaymath}
						G=\left(
							\begin{array}{cc}
										\bar{z}	&	\Zero\\
										\Zero		&	1
							\end{array}\right).
			\end{displaymath}
			It is easy to see that $\dim\cM_{G}=1$ and $G$ admits a factorization of the form
			\begin{equation*}
						G=W^{*}\left(
							\begin{array}{cc}
										z			&	\Zero\\
										\Zero	&	\bar{z}^{2}
							\end{array}\right)V^{*},
			\end{equation*}
			where
			\begin{equation*}
						W^{t}=\frac{1}{\sqrt{2}}\left(
							\begin{array}{cc}
										1	&	-\bar{z}\\
										z	&	1
							\end{array}\right)\,\mbox{ and }\;							
						V=\frac{1}{\sqrt{2}}\left(
							\begin{array}{cc}
										z^{2}	&	-1\\
										1			&	\bar{z}^{2}	
							\end{array}\right)
			\end{equation*}
			are thematic, and the Toeplitz operators $T_{z}$ and $T_{\bar{z}^{2}}$ are Fredholm with indices $-1$ and $2$, respectively.  
			By setting $\Psi\Mydef\bar{z}^{2}$, it is easy to see that $\dim\cM_{\Psi}=\dim\ker T_{\Psi}=\ind T_{\Psi}=2>\dim\cM_{G}$
			because $\Psi$ is unimodular.	
\end{rmk}

\section{Sequences of thematic indices}\label{SectionMonoFactorization}

We proceed by proving the existence of a monotone non-decreasing thematic factorization and show that other thematic 
factorizations are induced by a given monotone non-increasing thematic factorization.

\begin{definition}
			Let $G\in L^{\infty}(\M_{m,n})$ be a badly approximable matrix function whose superoptimal singular values $t_{j}=t_{j}(G)$,
			 $j\geq 0$, satisfy
			\begin{equation}\label{partialThematicCondition}
						\|H_{G}\|_{\rm e}<t_{r-1},\; t_{0}=\ldots=t_{r-1},\,\mbox{ and }\; t_{r-1}>t_{r}.
			\end{equation}
			We say that
			\[	(\,k_{0}, k_{1}, k_{2}, \ldots, k_{r-1}\,)	\]
			is \emph{a sequence of thematic indices for $G$} if $G$ admits a partial thematic factorization of the form 
			\begin{equation}\label{partialThem}
						W^{*}_{0}\cdot\ldots\cdot W^{*}_{r-1}\left(
						\begin{array}{ccccc}
									t_{0}u_{0}		&	\Zero				&	\ldots	&	\Zero					&	\Zero\\
									\Zero					&	t_{0}u_{1}	&	\ldots	&	\Zero					&	\Zero\\
									\vdots				&	\vdots			&	\ddots	&	\vdots				&	\vdots\\
									\Zero					&	\Zero				&	\ldots	&	t_{0}u_{r-1}	&	\Zero\\
									\Zero					&	\Zero				&	\ldots	&	\Zero					&	\Psi
						\end{array}\right)V^{*}_{r-1}\cdot\ldots\cdot V^{*}_{0},
			\end{equation}
			such that $\ind T_{u_{j}}=k_{j}$ and the matrix functions $V_{j}$ and $W_{j}$ are of the form $(\ref{almostThematic})$ 
			for $0\leq j\leq r-1$, and $\Psi$ satisfies $(\ref{PsiCondition})$. 
\end{definition}

\begin{thm}\label{NonDecSeq}
			Suppose that $G\in L^{\infty}(\M_{m,n})$ is a badly approximable matrix function satisfying $(\ref{partialThematicCondition})$.
			If $\nu$ equals the sum of the thematic indices corresponding to the superoptimal singular value $t_{0}(G)$, then
			\begin{equation}\label{oneSeq}	
						(\underbrace{1, 1,\ldots, 1}_{r-1}, \nu-r+1)
			\end{equation}
			is a sequence of thematic indices for $G$.  In particular, $G$ admits a monotone non-decreasing thematic factorization.
\end{thm}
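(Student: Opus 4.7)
The plan is to induct on $r$, at each step peeling off a single thematic index equal to $1$ from the top of a partial thematic factorization via Corollary \ref{MainCor}. The base case $r = 1$ is immediate: any partial thematic factorization of $G$ contains a single index for $t_0$, which must equal $\nu$, and the asserted sequence reduces to $(\nu) = (\nu - 1 + 1)$.

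For the inductive step, assume the statement for $r-1$ and rescale so that $t_0 = 1$. By the Alexeev--Peller result (\cite{AP2}, quoted above), $G$ admits a monotone non-increasing partial thematic factorization with indices $k_0 \geq k_1 \geq \cdots \geq k_{r-1} \geq 1$ summing to $\nu$. Since $k_1 \geq 1$, Corollary \ref{MainCor} applied with $k=1$ produces
\[
G = \cW^* \begin{pmatrix} u & \Zero \\ \Zero & \Delta \end{pmatrix} \cV^*,
\]
with $\ind T_u = 1$, $\cV$ and $\cW^t$ thematic, and $\Delta$ badly approximable satisfying $\|\Delta\|_{L^\infty(\M_{m-1,n-1})} = 1$. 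Standard properties of partial thematic factorizations (Chapter 14 of \cite{Pe}) then give $t_j(\Delta) = t_{j+1}(G)$ and $\|H_\Delta\|_{\rm e} \leq \|H_G\|_{\rm e} < 1$, so that $\Delta$ satisfies $(\ref{partialThematicCondition})$ with $r$ replaced by $r-1$. By the invariance of the sum of thematic indices corresponding to a fixed superoptimal singular value (\cite{PY2}, \cite{PT1}), the sum of thematic indices of $\Delta$ at the value $1$ equals $\nu - 1$. The inductive hypothesis applied to $\Delta$ yields a partial thematic factorization of $\Delta$ with indices $(1,\ldots,1,\nu-r+1)$ containing $r-2$ ones. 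Substituting this factorization of $\Delta$ back into the factorization of $G$ above---each of the resulting $V_j, W_j$ inheriting an additional leading $1\times 1$ identity block from the embedding---produces the desired sequence $(1,1,\ldots,1,\nu-r+1)$ of thematic indices for $G$.

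For the ``in particular'' clause, assume further that $G$ is admissible and very badly approximable, so that a genuine thematic factorization exists. Let $\tau_0 > \tau_1 > \cdots > \tau_s > 0$ be the distinct positive superoptimal singular values of $G$, with multiplicities $r_0,\ldots,r_s$ and block index-sums $\nu_0,\ldots,\nu_s$. Applying the first part of the theorem to each block in succession---admissibility guarantees that every subsequent lower block meets the essential-norm hypothesis at its own top singular value---and stitching the block-by-block non-decreasing sequences $(1,\ldots,1,\nu_i - r_i + 1)$ together yields a monotone non-decreasing thematic factorization of $G$.

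The main technical hurdle is verifying that the lower block $\Delta$ returned by Corollary \ref{MainCor} genuinely satisfies $(\ref{partialThematicCondition})$ with $r-1$ in place of $r$: the shift $t_j(\Delta) = t_{j+1}(G)$, the bound $\|H_\Delta\|_{\rm e} < 1$, and the preservation of the thematic-index sum under passage to $\Delta$ all rest on standard structural results about partial thematic factorizations of badly approximable matrix functions.
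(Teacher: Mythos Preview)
Your argument is correct and follows essentially the same route as the paper: both repeatedly apply Theorem~\ref{MainThm} (your invocation of Corollary~\ref{MainCor} with $k=1$ reduces to exactly this) to peel off a leading thematic index equal to $1$, then use invariance of the index sum to identify the final index as $\nu-r+1$. The only cosmetic difference is that you phrase the iteration as a formal induction on $r$ and spell out the verification that the lower block $\Delta$ inherits condition~(\ref{partialThematicCondition}), whereas the paper runs the iteration directly and leaves those checks implicit.
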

\begin{proof}		
			Consider any thematic factorization of $\Delta_{0}\Mydef t_{0}^{-1}G$.  It follows from Theorem \ref{MainThm} that 
			$\Delta_{0}$ admits a factorization of the form
			\begin{equation*}
						\Delta_{0}=\cW_{0}^{*}\left(
								\begin{array}{cc}
											u_{0}	&	\Zero\\
											\Zero	&	\Delta_{1}
								\end{array}\right)\cV^{*}_{0}
			\end{equation*}
			where $\ind T_{u_{0}}=1$.  Similarly, Theorem \ref{MainThm} implies that $\Delta_{1}$ also admits a factorization 
			of the form
			\begin{equation*}
						\Delta_{1}=\cW_{1}^{*}\left(
								\begin{array}{cc}
											u_{1}	&	\Zero\\
											\Zero	&	\Delta_{2}
								\end{array}\right)\cV^{*}_{1}
			\end{equation*}
			where $\ind T_{u_{1}}=1$.  Continuing in this manner, we obtain matrix functions $\Delta_{0},\Delta_{1},\ldots,
			\Delta_{r-2},\Delta_{r-1}$ with factorizations of the form
			\begin{equation*}
						\Delta_{j}=\cW_{j}^{*}\left(
								\begin{array}{cc}
											u_{j}	&	\Zero\\
											\Zero	&	\Delta_{j+1}
								\end{array}\right)\cV^{*}_{j},
			\end{equation*}
			where $\ind T_{u_{j}}=1$, for $0\leq j\leq r-2$.  It is easy to see that these matrix functions induce a partial thematic
			factorization of $G$ in which the first $r-1$ thematic indices equal 1.  Since the sum of the thematic indices $\nu$
			of $G$ is independent of the partial thematic factorization, it must be that the $r$th thematic index in this induced
			partial thematic factorization equals $\nu-(r-1)$.
\end{proof}

The following corollaries are immediate.

\begin{cor}
			If $G\in L^{\infty}(\M_{m,n})$ is an admissible very badly approximable matrix function, then $G$ admits a monotone 
			non-decreasing thematic factorization.
\end{cor}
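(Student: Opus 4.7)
The plan is to iterate Theorem \ref{NonDecSeq} once per block of equal superoptimal singular values. Since $G$ is admissible and very badly approximable, its positive superoptimal singular values organize into maximal constant runs $t^{(0)} > t^{(1)} > \ldots > t^{(p)} > 0$ of respective multiplicities $r_0, \ldots, r_p$, and admissibility gives $\|H_G\|_{\rm e} < t^{(p)} \leq t^{(0)}$. In particular the partial thematic condition $(\ref{partialThematicCondition})$ holds for $G$ with $r = r_0$, so applying Theorem \ref{NonDecSeq} to $G$ yields a partial thematic factorization whose top $r_0$ thematic indices form the non-decreasing sequence $(1, 1, \ldots, 1, \nu_0 - r_0 + 1)$, where $\nu_0$ is the sum of the thematic indices of $G$ associated with the level $t^{(0)}$; the factorization leaves a residual $\Psi^{(1)} \in L^{\infty}(\M_{m - r_0, n - r_0})$ satisfying $\|\Psi^{(1)}\| \leq t^{(0)}$ and $\|H_{\Psi^{(1)}}\| < t^{(0)}$.

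The second step is to verify that $\Psi^{(1)}$ is itself an admissible very badly approximable matrix function whose distinct positive superoptimal singular values are $t^{(1)} > \ldots > t^{(p)}$ with multiplicities $r_1, \ldots, r_p$. This uses the fact, recalled in Section \ref{thematicSection} just after the definition of a thematic factorization, that any thematic factorization of a matrix function exhibits its superoptimal singular values on the diagonal. Since $G$ admits some thematic factorization, and since the partial thematic factorization of $G$ supplied above can be completed to a thematic factorization of $G$ by substituting any thematic factorization of $\Psi^{(1)}$, the superoptimal singular values of $\Psi^{(1)}$ must be precisely $t^{(1)}, \ldots, t^{(p)}$ (with their multiplicities) followed by zeros; admissibility of $\Psi^{(1)}$ follows from $\|H_{\Psi^{(1)}}\|_{\rm e} \leq \|H_G\|_{\rm e} < t^{(p)}$.

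Next, I would repeat the procedure with $\Psi^{(1)}$ in place of $G$, producing $\Psi^{(2)}$, and so on. After $p + 1$ iterations the residual $\Psi^{(p+1)}$ has all superoptimal singular values equal to zero, hence $\Psi^{(p+1)} = \Zero$, and concatenating the partial thematic factorizations yields a full thematic factorization of $G$ in which each block of equal superoptimal singular values carries a non-decreasing sequence of thematic indices. By construction, this is a monotone non-decreasing thematic factorization of $G$.

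The main obstacle is the bookkeeping in the second step: one must justify that the surgery given by Theorem \ref{NonDecSeq} leaves a residual that still belongs to the class where the next iteration is permissible (i.e., that $\Psi^{(j)}$ inherits both admissibility and very bad approximability, and that its superoptimal singular values are precisely the tail of those of $G$). Once this is set up, the induction is an immediate consequence of Theorem \ref{NonDecSeq}.
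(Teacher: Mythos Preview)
Your proposal is correct and takes the same approach the paper has in mind: the paper declares the corollary ``immediate'' from Theorem~\ref{NonDecSeq} and gives no further argument, leaving precisely the block-by-block iteration you spell out to the reader. One small remark: your justification in the second step is mildly circular as phrased (you invoke a thematic factorization of $\Psi^{(1)}$ in order to read off its superoptimal singular values), but this is easily repaired by citing directly the standard facts recalled in Sections~\ref{thematicSection} and~\ref{SectionBadlyApp} (and Chapter~14 of \cite{Pe}) that each single thematic step peels off exactly the top superoptimal singular value, preserves very bad approximability, and does not increase $\|H_{\cdot}\|_{\rm e}$---which is exactly the ``bookkeeping'' you flag at the end.
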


\begin{cor}
			If $G\in(H^{\infty}+C)(\M_{m,n})$ is a very badly approximable matrix function, then $G$ admits a monotone non-decreasing 
			thematic factorization.
\end{cor}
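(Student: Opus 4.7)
The statement to prove is: if $G\in(H^{\infty}+C)(\M_{m,n})$ is a very badly approximable matrix function, then $G$ admits a monotone non-decreasing thematic factorization. The plan is to reduce this immediately to the preceding corollary (the admissible case) by verifying that such a $G$ is automatically admissible.

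First, I would dispose of the trivial possibility $G\in H^{\infty}(\M_{m,n})$. If $G$ were in $H^{\infty}$ and very badly approximable, then in particular $\Zero$ would be a best approximation of $G$, forcing $\|G\|_{L^{\infty}(\M_{m,n})}=\|G-\Zero\|_{L^{\infty}(\M_{m,n})}\leq\|G-G\|_{L^{\infty}(\M_{m,n})}=0$, so $G=\Zero$; the statement is vacuous (or trivial) in this case.

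Thus we may assume $G\in(H^{\infty}+C)(\M_{m,n})\setminus H^{\infty}(\M_{m,n})$. As recalled in Section \ref{intro}, every matrix function in $(H^{\infty}+C)(\M_{m,n})\setminus H^{\infty}(\M_{m,n})$ satisfies $\|H_{G}\|_{\rm e}=0$, so in particular $\|H_{G}\|_{\rm e}$ is strictly smaller than the smallest non-zero superoptimal singular value of $G$. Hence $G$ is admissible in the sense of the definition given after the display for $\dist_{L^{\infty}(\M_{m,n})}(\Phi,(H^{\infty}+C)(\M_{m,n}))$.

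With admissibility in hand, the conclusion is immediate from the preceding corollary, which asserts that every admissible very badly approximable matrix function in $L^{\infty}(\M_{m,n})$ admits a monotone non-decreasing thematic factorization. There is no real obstacle here — the entire content of the corollary is the observation that the hypothesis $G\in(H^{\infty}+C)(\M_{m,n})$ is a special case of admissibility, so the corollary for admissible functions applies verbatim.
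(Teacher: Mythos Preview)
Your proof is correct and is essentially the same approach as the paper's: the paper simply declares this corollary (together with the preceding one) to be immediate, relying on the observation, already made in the introduction, that any $G\in(H^{\infty}+C)(\M_{m,n})\setminus H^{\infty}(\M_{m,n})$ has $\|H_G\|_{\rm e}=0$ and is therefore admissible.
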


We go on to show that the thematic indices obtained in a monotone non-decreasing thematic factorization are \emph{not} 
uniquely determined.  Moreover, we determine all possible sequences of thematic indices for an admissible very badly 
approximable unitary-valued $2\times 2$ matrix function. 

\begin{thm}\label{unitarySeq}
			Let $U\in L^{\infty}(\M_{2})$ be an admissible very badly approximable unitary-valued matrix function.  Suppose 
			that $(k_{0},k_{1})$ is the monotone non-increasing sequence of thematic indices for $U$.  Then the collection
			of sequences of thematic indices for $U$ coincides with the set
			\[	\cS_{U}\Mydef\{(k_{1}-j,k_{0}+j): 0\leq j<k_{1}\}\cup\{(k_{0},k_{1})\}.	\]
\end{thm}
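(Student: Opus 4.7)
The plan is to verify the two set inclusions defining $\cS_U$. For $\cS_U$ contained in the set of sequences of thematic indices for $U$, the pair $(k_0, k_1)$ is given by hypothesis as the monotone non-increasing sequence. For $(k_1 - j, k_0 + j)$ with $0 \leq j < k_1$, I would apply Corollary \ref{MainCor} to the monotone non-increasing factorization $U = W_0^{*}\,\mathrm{diag}(u_0, u_1)\,V_0^{*}$ with $k := k_1 - j$; since $1 \leq k \leq k_1 = \ind T_{u_1}$, this is a legal choice and the corollary produces a thematic factorization with first index $k_1 - j$. By the constancy of the sum of thematic indices for admissible very badly approximable matrix functions (recalled in Section \ref{intro}), the second index must equal $k_0 + j$.

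For the reverse inclusion, suppose $(\ell_0, \ell_1)$ is any sequence of thematic indices for $U$; the same sum-constancy gives $\ell_0 + \ell_1 = k_0 + k_1$. If $\ell_0 \geq \ell_1$, then $(\ell_0, \ell_1)$ is itself monotone non-increasing, and the uniqueness clause in the Fact cited in Section \ref{intro} forces $(\ell_0, \ell_1) = (k_0, k_1) \in \cS_U$. Otherwise $\ell_0 < \ell_1$, and setting $j := k_1 - \ell_0$ places $(\ell_0, \ell_1)$ in $\cS_U$ as soon as $\ell_0 \leq k_1$. The crux of the proof is therefore to show: if $\ell_0 < \ell_1$, then $\ell_0 \leq k_1$.

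I would argue by contradiction, assuming $\ell_0 > k_1$; together with $\ell_0 < \ell_1 = k_0 + k_1 - \ell_0$, this forces $k_1 < \ell_0 < \ell_1 < k_0$. The key maneuver is to examine $z^{k_1} U$ through both factorizations. From the hypothetical $(\ell_0, \ell_1)$-factorization $U = \cW^{*}\,\mathrm{diag}(\tilde u_0, \tilde u_1)\,\cV^{*}$, multiplication by $z^{k_1}$ yields $z^{k_1} U = \cW^{*}\,\mathrm{diag}(z^{k_1}\tilde u_0, z^{k_1}\tilde u_1)\,\cV^{*}$, whose diagonal entries have strictly positive Fredholm indices $\ell_0 - k_1$ and $k_0 - \ell_0$; hence $z^{k_1} U$ is very badly approximable and $t_1(z^{k_1} U) = 1$. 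From the $(k_0, k_1)$-factorization, Corollary \ref{badFact} lets me write $u_1 = \bar z^{k_1}\,\bar h_1/h_1$ with $h_1$ outer and $h_1, 1/h_1 \in H^{p}$ for some $p > 2$, so $z^{k_1} u_1 = \bar h_1/h_1$; Theorem \ref{invThm} then implies $T_{\bar h_1/h_1}$ is invertible, forcing $\|H_{\bar h_1/h_1}\| < 1$. Thus $z^{k_1} U = W_0^{*}\,\mathrm{diag}(z^{k_1} u_0, \bar h_1/h_1)\,V_0^{*}$ is a partial thematic factorization with a single thematic row of index $k_0 - k_1$ and scalar lower block of Hankel norm strictly below $1$, giving $t_1(z^{k_1} U) < 1$ and producing the desired contradiction. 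The main obstacle is spotting the shift by $z^{k_1}$: it destroys the badly approximable character of the $u_1$-block from the monotone non-increasing factorization, while preserving that of both diagonal entries in the hypothetical $(\ell_0, \ell_1)$-factorization, thereby exposing the inconsistency.
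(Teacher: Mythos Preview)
Your proof is correct and follows essentially the same route as the paper: both directions of the set equality are handled identically (Corollary~\ref{MainCor} plus sum-constancy for one inclusion, uniqueness of the monotone non-increasing sequence to reduce the other inclusion to showing $\ell_0\le k_1$), and the contradiction is obtained by the same key maneuver of multiplying by $z^{k_1}$. The only cosmetic difference is in the last step: the paper observes that $z^{k_1}U$, being very badly approximable from the $(\ell_0,\ell_1)$-factorization, has a monotone non-increasing sequence $(\alpha,\beta)$ with $\beta\ge 1$, so that $(\alpha+k_1,\beta+k_1)$ would be a second monotone non-increasing sequence for $U$, violating uniqueness; you instead compare the value of $t_1(z^{k_1}U)$ coming from the two factorizations. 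These are equivalent readings of the same contradiction.
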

\begin{proof}
			Let $0\leq j<k_{1}$.  By Corollary \ref{MainCor}, $U$ admits a factorization of the form
			\begin{equation*}
						U=\cW^{*}\left(
								\begin{array}{cc}
											u_{0}	&	\Zero\\
											\Zero	&	u_{1}
								\end{array}\right)\cV^{*}
			\end{equation*}
			with $\ind T_{u_{0}}=k_{1}-j$.  Since the sum of the thematic indices of $U$ is independent of the thematic 
			factorization, it must be that $\ind T_{u_{1}}=k_{0}+j$.  Thus $\cS_{U}$ consists of sequences of thematic indices
			for $U$.
			
			Suppose now that $(a,b)$ is a sequence of thematic indices for $U$ that does not belong to $\cS_{U}$.  In this case, 
			$U$ admits a factorization of the form
			\begin{equation*}
						U=W^{*}\left(
								\begin{array}{cc}
											u_{0}	&	\Zero\\
											\Zero	&	u_{1}
								\end{array}\right)V^{*}
			\end{equation*}
			for some thematic matrix functions $V$ and $W^{t}$, and	unimodular functions $u_{0}$ and $u_{1}$ such that 
			$\ind T_{u_{0}}=a$ and $\ind T_{u_{1}}=b$.  Since $(a,b)\notin\cS_{U}$, it follows that $b>a$ and $a>k_{1}$.  
			Thus,	
			\begin{equation*}
						z^{k_{1}}U=W^{*}\left(
								\begin{array}{cc}
											z^{k_{1}}u_{0}	&	\Zero\\
											\Zero						&	z^{k_{1}}u_{1}
								\end{array}\right)V^{*}
			\end{equation*}
			is a very badly approximable unitary-valued matrix function.  In particular, $z^{k_{1}}U$ admits a
			monotone non-increasing thematic sequence, say $(\alpha,\beta)$.  Hence, $(\alpha+k_{1},\beta+k_{1})$
			is a monotone non-increasing sequence of thematic indices for $U$ and so, by the uniqueness of a monotone 
			non-increasing sequence, $k_{1}=\beta+k_{1}$ for some $\beta\geq1$ a contradiction.  This completes the proof.
\end{proof}

We now recall how monotone non-increasing thematic factorizations were obtained in \cite{AP2}.

Let $G\in L^{\infty}(\M_{m,n})$ be a badly approximable matrix function such that $(\ref{partialThematicCondition})$ holds.  
In this case, it is known that $G$ admits a monotone \emph{non-increasing} partial thematic factorization and that the thematic 
indices appearing in any monotone non-increasing partial thematic factorization of $G$ are uniquely determined by $G$.  In fact, 
as discussed in Section \ref{SectionBadlyApp}, $G_{0}=t_{0}^{-1}G$ admits a factorization of the form
\begin{equation*}
			G_{0}=W_{0}^{*}\left(
					\begin{array}{cc}
								u_{0}	&	\Zero\\
								\Zero	&	G_{1}
					\end{array}\right)V^{*}_{0}
\end{equation*}
with $\ind T_{u_{0}}=\iota(H_{G_{0}})$ and $\iota(H_{G_{0}})\geq\iota(H_{G_{1}})$ (see $(\ref{iotaDef})$).  Similarly, 
for each $1\leq j\leq r-1$, we obtain a matrix function $G_{j}$ with a factorization of the form
\begin{equation*}
			G_{j}=\breve{W}^{*}_{j}\left(
					\begin{array}{cc}
								u_{j}	&	\Zero\\
								\Zero	&	G_{j+1}
					\end{array}\right)\breve{V}^{*}_{j}
\end{equation*}
with $\ind T_{u_{j}}=\iota(H_{G_{j}})$ and $\iota(H_{G_{j}})\geq\iota(H_{G_{j+1}})$.  Then
\[	(\iota(H_{G_{0}}), \iota(H_{G_{1}}), \ldots, \iota(H_{G_{r-1}}))	\]
is the monotone non-increasing sequence of thematic indices for $G$.  (See \cite{AP2} or Section 10 in Chapter 14 of \cite{Pe}.)

Note that, in the general setting of $m\times n$ matrix functions, at least two sequences of thematic indices for $G$ exist; 
the monotone non-increasing sequence and the sequence in $(\ref{oneSeq})$.  The question remains: \emph{Are there any others?}

\begin{thm}\label{themSequence}
			Suppose $G\in L^{\infty}(\M_{m,n})$ is a badly approximable matrix function satisfying $(\ref{partialThematicCondition})$. If
			\[	(\,k_{0}, k_{1}, k_{2}, \ldots, k_{r-1}\,)	\]
			is the monotone non-increasing sequence of thematic indices for $G$, then
			\[	(\,k_{1}, k_{0}, k_{2}, \ldots, k_{r-1}\,)	\] 
			is also sequence of thematic indices for $G$.
\end{thm}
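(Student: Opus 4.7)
We may assume $k_0 > k_1$, since if $k_0 = k_1$ the swapped sequence coincides with the monotone non-increasing one and there is nothing to prove. The plan is to apply Corollary~\ref{MainCor} to strip off a first thematic block of Toeplitz index $k_1$ from $G_0 := t_0^{-1}G$, and then to identify the first index of the resulting lower block via Theorem~\ref{CompareIotas}.

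Writing the top two levels of the monotone non-increasing factorization of $G$ as
\[
G_0 = W_0^* \begin{pmatrix} u_0 & \Zero \\ \Zero & G_1 \end{pmatrix} V_0^*, \qquad G_1 = \breve{W}_1^* \begin{pmatrix} u_1 & \Zero \\ \Zero & G_2 \end{pmatrix} \breve{V}_1^*,
\]
with $\ind T_{u_0} = k_0$ and $\ind T_{u_1} = k_1$, I would apply Corollary~\ref{MainCor} with $k = k_1$ (valid since $k_1 \leq \ind T_{u_1}$) to obtain thematic matrix functions $\cV_0, \cW_0^t$, a unimodular $u$ with $\ind T_u = k_1$, and a badly approximable $\Delta$ with $\|\Delta\|_{L^\infty} = 1$ satisfying
\[
G_0 = \cW_0^* \begin{pmatrix} u & \Zero \\ \Zero & \Delta \end{pmatrix} \cV_0^*.
\]
Comparing these two factorizations of $G_0$ one has $\ind T_{u_0} = k_0 = \iota(H_{G_0})$, $\iota(H_{G_1}) = k_1 < k_0$, and $\ind T_u = k_1 = \iota(H_{G_1})$; the ``in addition'' clause of Theorem~\ref{CompareIotas} then yields $\iota(H_\Delta) = k_0$.

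Consequently $\Delta$ possesses a monotone non-increasing thematic factorization whose first index is $k_0$, and concatenating it with the extraction above produces a partial thematic factorization of $G$ whose initial indices are $(k_1, k_0, \ldots)$. What remains is to show that the monotone non-increasing indices of $\Delta$ beyond the first are exactly $(k_2, k_3, \ldots, k_{r-1})$. The invariance of the total sum of thematic indices (see \cite{PT1}) forces the remaining indices of $\Delta$ to sum to $k_2 + \cdots + k_{r-1}$, which already handles the cases $r \leq 3$ immediately.

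For $r \geq 4$ the main obstacle is to pin down the individual indices of $\Delta$ beyond the first, since sum invariance alone is too weak. I would proceed by induction on $r$, combining the inductive hypothesis applied to the lower block of the monotone non-increasing factorization of $\Delta$ with Theorem~\ref{CompareIotas}, the uniqueness of the monotone non-increasing sequence from \cite{AP2}, and an analysis of how the construction in the proof of Theorem~\ref{MainThm} interacts with the deeper blocks $G_2, G_3, \ldots$ arising in the original monotone non-increasing factorization of $G$.
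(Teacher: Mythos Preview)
Your setup through the application of Theorem~\ref{CompareIotas} matches the paper's argument exactly: extract a first block of index $k_1$, use the equality case of Theorem~\ref{CompareIotas} to get $\iota(H_\Delta)=k_0$, and let $(\kappa_1,\kappa_2,\ldots,\kappa_{r-1})$ be the monotone non-increasing sequence for $\Delta$ with $\kappa_1=k_0$.

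The gap is in your tail identification for $r\geq4$. Your proposed induction on $r$ is not a workable mechanism: the inductive hypothesis (that swapping the first two indices of a non-increasing sequence is valid for smaller $r$) gives no leverage for showing $\kappa_j=k_j$ for $j\geq2$, and ``an analysis of how the construction in the proof of Theorem~\ref{MainThm} interacts with the deeper blocks'' is not an argument. You also dismiss sum invariance too quickly. The paper's key idea is that sum invariance \emph{of the shifted functions} $z^{\ell}G$ is strong enough: for each $\ell$, the sum of thematic indices of $z^{\ell}G$ corresponding to the superoptimal singular value $1$ is an invariant, and it equals $\sum_{\{j:\,k_j>\ell\}}(k_j-\ell)$ when computed from the non-increasing sequence, and $\sum_{\{j:\,\kappa_j>\ell\}}(\kappa_j-\ell)$ (with $\kappa_0:=k_1$, $\kappa_1=k_0$) when computed from the new sequence. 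Taking $\ell=k_2$ forces $\kappa_2\leq k_2$; taking $\ell=\kappa_2$ forces $k_2\leq\kappa_2$; then one proceeds inductively in $j$ (not in $r$). This is a short direct argument that avoids any need to track the construction of Theorem~\ref{MainThm} through lower blocks.
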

\begin{proof}
			Without loss of generality, we may assume that $t_{0}=1$ and $k_{0}>k_{1}$.  By Theorems \ref{MainThm} and \ref{CompareIotas}, 
			$G$ admits a thematic factorization of the form
			\begin{equation*}
						G=\cW^{*}\left(
							\begin{array}{cc}
										u			&	\Zero\\
										\Zero	&	\Delta
							\end{array}\right)\cV^{*},
			\end{equation*}
			where $\ind T_{u}=k_{1}$ and $\iota(H_{\Delta})=k_{0}$.  Let $(\,\kappa_{1}, \kappa_{2}, \ldots, \kappa_{r-1}\,)$ be
			the monotone non-increasing sequence of thematic indices for $\Delta$.  In particular, $\kappa_{1}=k_{0}$ and
			\begin{equation}\label{newSeq}
						(\,k_{1}, k_{0}, \kappa_{2}, \ldots, \kappa_{r-1}\,)
			\end{equation}
			is a sequence of thematic indices for $G$.  We claim that 
			\[	\kappa_{j}=k_{j}\,\mbox{ for }2\leq j\leq r-1.	\]
			
			By considering the monotone non-increasing sequence for $G$, it is easy to see that the sum of the thematic indices 
			corresponding to the superoptimal singular value 1 of $z^{k_{2}}G$ equals 
			\[	(k_{0}-k_{2})+(k_{1}-k_{2}).	\]
			On the other hand, this sum is also equal to 
			\[	(k_{1}-k_{2})+(k_{0}-k_{2})+\sum_{\{j\geq 2:\; \kappa_{j}\geq k_{2}\}}(\kappa_{j}-k_{2}),	\]
			because the sequence in $(\ref{newSeq})$ is a sequence of thematic indices for $G$.  This implies that $\kappa_{2}\leq k_{2}$.
			Now, by considing the matrix function $z^{\kappa_{2}}G$, the same argument reveals that $k_{2}\leq \kappa_{2}$.  
			Therefore $\kappa_{2}=k_{2}$.
			
			Let $2\leq\ell<r-1$.  Suppose we have already shown that $\kappa_{j}=k_{j}$ for $2\leq j\leq\ell$.
			In the same manner, the sum of the thematic indices corresponding to the superoptimal singular value 1
			of $z^{k_{\ell+1}}G$ equals 
			\[	\sum_{j=0}^{\ell}(k_{j}-k_{\ell+1})\,\mbox{ and }\;\sum_{j=0}^{\ell}(k_{j}-k_{l+1})+
					\sum_{\{j\geq\ell+1:\; \kappa_{j}\geq k_{l+1}\}}(\kappa_{j}-k_{l+1}).	\] 
			This implies that $\kappa_{\ell+1}\leq k_{l+1}$, and a similar argument shows $k_{l+1}\leq\kappa_{l+1}$.
			Hence we must have that $\kappa_{j}=k_{j}$ for $2\leq j\leq r-1$.
\end{proof}

Theorem \ref{themSequence} provides a stronger conclusion than one might think.  Loosely speaking, it says that we can 
always interchange the highest two adjacent thematic indices in any monotone non-increasing sequence of thematic indices 
and still obtain another sequence of thematic indices for the same matrix function.  Let us illustrate this with the 
following example.

\begin{ex}
			For simplicity, consider the very badly approximable function
			\begin{equation*}
						G=\left(\begin{array}{ccc}
													\bar{z}^{3}	&	\Zero				&	\Zero\\
													\Zero				&	\bar{z}^{2}	&	\Zero\\
													\Zero				&	\Zero				&	\bar{z}
										\end{array}
							\right).
			\end{equation*}
			Clearly, $(3, 2, 1)$ is the monotone non-increasing sequence of thematic indices for $G$.  Our results imply 
			that there are many other sequences of thematic indices for $G$.  Indeed, by considering the subsequence $(2, 1)$,
			Theorem \ref{themSequence} implies that $(\;3, 1, 2\;)$ is also a sequence of thematic indices for $G$.  Similarly, 
			it is easy to see that $(\;2, 3, 1\;)$ and $(\;2, 1, 3\;)$ are also sequences of thematic indices for $G$.  On the 
			other hand, it follows from Theorem \ref{NonDecSeq} that $(\;1, 1, 4\;)$ is a sequence of thematic indices for $G$.
			
			This leads us to ask:  \emph{Are there other sequences of thematic indices in which the first index is equal to 1?}
			
			It can be verified that $G$ admits the following thematic factorizations:
			\begin{align*}
						G&=					
						\frac{1}{\sqrt{2}}\left(
							\begin{array}{ccc}
										1				&	1			&	\Zero\\
										\Zero		&	\Zero	&	\sqrt{2}\\
										1				&	-1		&	\Zero
							\end{array}\right)\left(
							\begin{array}{ccc}
										\bar{z}	&	\Zero				&	\Zero\\
										\Zero		&	\bar{z}^{3}	&	\Zero\\
										\Zero		&	\Zero				&	\bar{z}^{2}
							\end{array}\right)
							\frac{1}{\sqrt{2}}\left(
							\begin{array}{ccc}
										\bar{z}^{2}	&	\Zero			&	1\\
										1						&	\Zero			&	-z^{2}\\
										\Zero				&	\sqrt{2}	&	\Zero
							\end{array}\right)\\
						&=						
						\frac{1}{\sqrt{2}}\left(
							\begin{array}{ccc}
										1					&	-z		&	\Zero\\
										\bar{z}		&	1			&	\Zero\\
										\Zero			&	\Zero	&	\sqrt{2}
							\end{array}\right)\left(
							\begin{array}{ccc}
										\bar{z}	&	\Zero				&	\Zero\\
										\Zero		&	\bar{z}^{4}	&	\Zero\\
										\Zero		&	\Zero				&	\bar{z}
							\end{array}\right)
							\frac{1}{\sqrt{2}}\left(
							\begin{array}{ccc}
										\bar{z}^{2}	&	1				&	\Zero\\
										-1					&	z^{2}		&	\Zero\\
										\Zero				&	\Zero		&	\sqrt{2}
							\end{array}\right).
			\end{align*}
			Thus,	$(\;1, 3, 2\;)$ and $(\;1, 4, 1\;)$	are sequences of thematic indices for $G$ as well.  These sequences induce 
			two others by considering the subsequences $(\;3, 2\;)$ and $(\;4, 1\;)$; namely $(\;1, 2, 3\;)$ and $(\;1, 1, 4\;)$.
			Thus, the matrix function $G$ admits at least 8 different sequences of thematic indices, namely
			\begin{align*}
						(\;3, 2, 1\;),\;(\;3, 1, 2\;),\;(\;2, 3, 1\;),&\;(\;2, 1, 3\;),\;(\;1, 3, 2\;),\\
						(\;1, 2, 3\;),\;(\;1, 4, 1\;),\,&\mbox{ and }(\;1, 1, 4\;).
			\end{align*}
			It is easy to verify that these are all possible sequences of thematic indices for $G$.
\end{ex}

\section{Unitary-valued very badly approximable $2\times 2$ matrix functions}\label{unitarySection}

The problem of finding all possible sequences of thematic indices for an arbitrary admissible very badly approximable 
matrix function seems rather difficult for $m\times n$ matrix functions with $\min\{m,n\}>2$.  However, in the case of 
unitary-valued $2\times 2$ matrix functions, the problem has a straightforward solution provided by Theorem \ref{unitarySeq}.  
In this section, we introduce a simple algorithm that yields thematic factorizations with desired thematic indices 
for such matrix functions.

\smallskip

\begin{center}
			\textbf{Algorithm}
\end{center}
Let $U$ be an admissible very badly approximable unitary-valued $2\times 2$ matrix function on $\T$ and $(k_{0},k_{1})$
denote the monotone non-increasing sequence of thematic indices for $U$.  Suppose $U$ admits a monotone non-increasing 
thematic factorization of the form
\begin{equation}\label{MyG}
			U=(\,\bar{w}_{0}\; \Xi\,)\left(
				\begin{array}{cc}
							\bar{z}^{k_{0}}\frac{\bar{h}_{0}}{h_{0}}	& \Zero\\
							\Zero																			&	\bar{z}^{k_{1}}\frac{\bar{h}_{1}}{h_{1}}
				\end{array}\right)\left(
				\begin{array}{c}
							v^{*}\\
							\Theta^{t}
				\end{array}\right),
\end{equation}
where $h_{0}$, $h_{1}$, and their respective inverses belong to $H^{p}$ for some $2<p\leq\infty$.  For each integer 
$j$ satisfying $1\leq j\leq k_{1}$, a thematic factorization of $U$ with thematic indices $(j, k_{0}+k_{1}-j)$ 
can be obtained as follows.
\begin{enumerate}
			\item	Find left-inverses $A$ and $B$ in $H^{\infty}$ for $\Theta$ and $\Xi$, respectively.
			\item	Set $\displaystyle{u_{0}=\bar{z}^{k_{0}-j+1}\frac{\bar{h}_{0}}{h_{0}}}$ and 
						$\displaystyle{\Psi=\bar{z}^{k_{1}-j+1}\frac{\bar{h}_{1}}{h_{1}}}$.
			\item	Let $\xi=z^{k_{1}-j}h_{1}$.  Find a solution $a_{0}\in H^{2}$ to the equation 
						\[	T_{u}a_{0}=\pP_{+}(w^{t}B^{*}\Psi-uv^{*}A^{t})\xi.	\]
						If $j<k_{1}$, we require, in addition, that $z$ is not an inner divisor of $a_{0}$.
						(Note that if $z$ is an inner divisor of $a_{0}$, then it suffices to replace $a_{0}$
						with $a_{0}+h_{0}$.)
			\item	Let $\xi_{\#}=A^{t}\xi+a_{0} v$ and $\eta_{\#}=\bar{z}\bar{G}\bar{\xi}_{\#}$.  Choose an outer 
						function $h\in H^{2}$ such that $|h(\zeta)|=\|\xi_{\#}(\zeta)\|_{\C^{2}}$ for a.e. $\zeta\in\T$.
			\item	Let $\nu=h^{-1}\xi_{\#}$ and $\omega=h^{-1}\eta_{\#}$.  Find thematic completions 
						\[	\cV=(\,\nu\;\bar{\Upsilon}\,)\,\mbox{ and }\;\cW^{t}=(\,\omega\;\bar{\Omega}\,)	\]
						to $\nu$ and $\omega$,  respectively. 
			\item	The desired thematic factorization for $G$ is given by
						\begin{equation}
									G=\cW^{*}\left(
									\begin{array}{cc}
												u			&	\Zero\\
												\Zero	&	\Delta
									\end{array}\right)\cV^{*}
						\end{equation}
						where
						\[	u=\bar{z}^{j}\frac{\bar{h}}{h}\quad\mbox{ and }\quad\Delta=\Omega^{*}G\bar{\Upsilon}.	\]
\end{enumerate}
\begin{center}
			\textbf{End of algorithm}
\end{center}

The validity of this algorithm is justified by the proof of Theorem \ref{MainThm} and Corollary \ref{MainCor}.

For matrix functions $G\in\cR(\M_{2})$, the badly approximable scalar functions appearing in the diagonal factor 
of $(\ref{MyG})$ also belong to $\cR$.  This is a consequence of the results in \cite{PY1} (see also Sections 5 and 
12 of Chapter 14 in \cite{Pe}).  As mentioned in Remark \ref{classR}, the outer functions $h_{0}$ and $h_{1}$ are 
(up to a multiplicative constant) products of quotients of reproducing kernels of $H^{2}$.  Therefore, steps 1 
through 6 of the algorithm are more easily implemented if $G\in\cR(\M_{2})$.

\begin{ex}\label{firstEx}
			Consider the matrix function
			\begin{equation}
						G=\frac{1}{\sqrt{2}}\left(
						\begin{array}{cc}
									\bar{z}^{4}	&	-\bar{z}^{2}\\
									\bar{z}^{3}	&	\bar{z}
						\end{array}\right)
						=\frac{1}{\sqrt{2}}\left(
						\begin{array}{cc}
									\bar{z}	&	-1\\
									1				&	z
						\end{array}\right)\left(
						\begin{array}{cc}
									\bar{z}^{3}	&	\Zero\\
									\Zero				&	\bar{z}^{2}
						\end{array}\right).
			\end{equation}
			Let			
			\begin{align}\label{ex1Functions}
						w=\frac{1}{\sqrt{2}}\left(
						\begin{array}{c}
									z\\
									1
						\end{array}\right),\;
						\Xi=\frac{1}{\sqrt{2}}\left(
						\begin{array}{c}
									-1\\
									z
						\end{array}\right),\;
						v=\left(
						\begin{array}{c}
									1\\
									\Zero
						\end{array}\right),\;&
						\Theta=\left(
						\begin{array}{c}
									\Zero\\
									1
						\end{array}\right),\nonumber\\
						h_{0}=h_{1}=1,\; B=\sqrt{2}\left(\,-1\quad\Zero\,\right),\,\mbox{ and }&\;A=\left(\,\Zero\quad1\,\right).
			\end{align}
			We find thematic factorizations with sequences of indices $(2,3)$ and $(1,4)$.
						
			\begin{enumerate}
						\item	\emph{A thematic factorization for $G$ with sequence of indices $(2,3)$:}	
						
									Let
									\[	u_{0}=\bar{z}^{2},\,\Psi=\bar{z},\,\xi=1,\,\mbox{ and }\;a_{0}=-z^{2}.	\]
									In this case, it is easy to verify that
									\begin{equation*}
												\xi_{\#}=\left(
												\begin{array}{c}
															-z^{2}\\
															1
												\end{array}\right)\,\mbox{ and }\;
												\eta_{\#}=\frac{1}{\sqrt{2}}\left(
												\begin{array}{c}
															-2\\
															\Zero
												\end{array}\right).
									\end{equation*}
									Since $\|\xi_{\#}(\zeta)\|_{\C^{2}}=2$ on $\T$, we may take $h(\zeta)=\sqrt{2}$ for $\zeta\in\T$.  Then	
									\begin{equation*}
												\nu=\frac{1}{\sqrt{2}}\left(
												\begin{array}{c}
															-z^{2}\\
															1
												\end{array}\right)\,\mbox{ and }\;
												\omega=\left(
												\begin{array}{c}
															-1\\
															\Zero
												\end{array}\right)
									\end{equation*}
									have thematic completions
									\[	\cV=(\,\nu\;\bar{\Upsilon}\,)\,\mbox{ and }\;\cW^{t}=(\,\omega\;\bar{\Omega}\,),	\]
									where
									\begin{equation*}
												\Upsilon=\frac{1}{\sqrt{2}}\left(
												\begin{array}{c}
															1\\
															z^{2}
												\end{array}\right)\,\mbox{ and }\;
												\Omega=\left(
												\begin{array}{c}
															\Zero\\
															1
												\end{array}\right).
									\end{equation*}
									Thus, $G$ admits the factorization
									\begin{equation*}
												G=\left(\begin{array}{cc}
																			-1		&	\Zero\\
																			\Zero	&	1
																\end{array}\right)
												\left(\begin{array}{cc}
																			\bar{z}^{2}	&	\Zero\\
																			\Zero				&	\bar{z}^{3}
																\end{array}\right)\frac{1}{\sqrt{2}}\left(
																\begin{array}{cc}
																			-\bar{z}^{2}	&	1\\
																			1						&	z^{2}
																\end{array}\right),
									\end{equation*}
									with sequence of thematic indices $(2,3)$ as desired.
									
						\item	\emph{A thematic factorization for $G$ with sequence of indices $(1,4)$:}	
						
									Let
									\[	u_{0}=\bar{z}^{3},\,\Psi=\bar{z}^{2},\,\xi=z,\,\mbox{ and }\;a_{0}=1-z^{3}.	\]
									It is easy to verify that
									\begin{equation*}
												\xi_{\#}=\left(
												\begin{array}{c}
															1-z^{3}\\
															z
												\end{array}\right)\;\mbox{ and }\;
												\eta_{\#}=\frac{1}{\sqrt{2}}\left(
												\begin{array}{c}
															z^{3}-2\\
															z^{2}
												\end{array}\right).
									\end{equation*}
									Since $\|\xi_{\#}(\zeta)\|_{\C^{2}}^{2}=3-\bar{z}^{3}-z^{3}$ on $\T$, we may choose 
									\[	h=a^{2}-\frac{1}{a^{2}}z^{3},\mbox{ where }a=\frac{-1+\sqrt{5}}{2}.	\] 
									By setting
									\begin{equation*}
												\nu=\frac{1}{h}\left(
												\begin{array}{c}
															1-z^{3}\\
															z
												\end{array}\right)\;\mbox{ and }\,
												\omega=\frac{1}{h\sqrt{2}}\left(
												\begin{array}{c}
															z^{3}-2\\
															z^{2}
												\end{array}\right),
									\end{equation*}					
									we may take
									\begin{equation*}
												\Upsilon=\frac{1}{h}\left(
												\begin{array}{c}
															-z\\
															1-z^{3}
												\end{array}\right)\;\mbox{ and }\,
												\Omega=\frac{1}{h\sqrt{2}}\left(
												\begin{array}{c}
															-z^{2}\\
															z^{3}-2
												\end{array}\right),
									\end{equation*}	
									so that the matrix functions
									\[	\cV=(\,\nu\;\bar{\Upsilon}\,)\,\mbox{ and }\;\cW^{t}=(\,\omega\;\bar{\Omega}\,)	\]
									are thematic.  Since
									\[	\Omega^{*}G\bar{\Upsilon}=\frac{\bar{z}^{4}}{\bar{h}^{2}}(3-\bar{z}^{3}-z^{3})
											=\frac{\bar{z}^{4}}{\bar{h}^{2}}|h|^{2}=\bar{z}^{4}\frac{h}{\bar{h}},	\]
									$G$ admits the factorization
									\begin{equation*}
												G=\cW^{*}\left(
																\begin{array}{cc}
																			\bar{z}\frac{\bar{h}}{h}	&	\Zero\\
																			\Zero											&	\bar{z}^{4}\frac{h}{\bar{h}}
																\end{array}\right)\cV^{*}
									\end{equation*}
									with sequence of thematic indices $(1,4)$ as desired.
			\end{enumerate}
\end{ex}

\textbf{Acknowledgments.}  This paper is based in part on the author's Ph.D. dissertation at Michigan State University.
The author would like to thank his advisor, Professor Vladimir V. Peller, for his support, patience, helpful remarks and 
valuable discussions concerning this work.

\end{document}